\title{Differentiation theorems for BV functions of several variables, and applications}
\author{Xianrui Zhang\textsuperscript{1}}
\affiliation{Department of Mathematics, Sun Yat-sen University, Guangzhou, China}
\email{\texttt{zhangxr86@mail2.sysu.edu.cn; zhangxianrui@gmdyxx.wecom.work}}
\begin{document}

\maketitle
\thispagestyle{firstpage}
\begin{abstract} 
We extend the classical Lebesgue and Fubini differentiation theorems to functions of several variables, using the notions of joint derivative and joint monotonicity. Our first main result shows that for a function $f$ of bounded variation, the joint derivative exists almost everywhere, its $L^1$ norm is bounded by the total variation of $f$, and equality in this bound characterizes absolute continuity. Our second main result shows that, for a convergent series of jointly monotone increasing functions, the joint derivative of the sum equals the sum of the joint derivatives almost everywhere.
\end{abstract} 

\begin{keywords}
multivariable calculus, bounded variation, Lebesgue differentiation theorem, Fubini differentiation theorem
\end{keywords}

\msc{26B30, 26B35}

\section{Introduction}
In this section, we recall the notions of \textit{joint increment}, \textit{joint monotonicity}, \textit{joint derivative}, \textit{bounded variation} (in the sense of Hardy and Krause), and \textit{absolute continuity} for functions of several variables. We then present the main results of this work, including a Jordan decomposition theorem, a Lebesgue differentiation theorem for functions of bounded variation, and a Fubini differentiation theorem for series of jointly monotone increasing functions. 

We begin with the definition of joint increment. 

\begin{definition}{(joint increment, cf. [5, \S 254])} 
    Let \(f\) be a function of $n$ variables. When \(n=1\), define for $a, b\in\mathbb R$, 
    \[\Delta^1_{f}[a,b]=f(b)-f(a);\]
    when \(n\ge 2\), define for \(\boldsymbol{a}=(a_{1},a_{2},\dots,a_{n})\), 
    \(\boldsymbol{b}=(b_{1},b_{2},\dots,b_{n})\in\mathbb R^n\), 
    \begin{equation}
        \begin{aligned}
            \Delta_{f}^{n}[\boldsymbol{a},\boldsymbol{b}]&=\Delta_{f|_{x_n=b_n}}^{n-1}[(a_{1},\cdots, a_{n-1}),(b_{1},\cdots,b_{n-1})]\\
            &-\Delta_{f|_{x_n=a_n}}^{n-1}[(a_{1},\cdots, a_{n-1}),(b_{1},\cdots,b_{n-1})].            
        \end{aligned}
    \end{equation}
\end{definition}
\begin{sign}
    For \(\boldsymbol{a}=(a_{1},a_{2},\dots,a_{n})\), 
    \(\boldsymbol{b}=(b_{1},b_{2},\dots,b_{n})\in\mathbb R^n\), we use \([\boldsymbol{a},\boldsymbol{b}]\) to denote the $n$-dimensional rectangle with diagonal vertices $\boldsymbol{a}$ and $\boldsymbol{b}$. For \(\boldsymbol{\epsilon}=(\epsilon_{1},\epsilon_{2},\dots,\epsilon_{n})\), we use \(\boldsymbol{\epsilon}\circ(\boldsymbol{b}-\boldsymbol{a})\) to denote 
    \[\boldsymbol{\epsilon}\circ(\boldsymbol{b}-\boldsymbol{a})=\big(\epsilon_1(b_1-a_1),\dots,\epsilon_n(b_n-a_n)\big).\]
\end{sign} 
\begin{remark}
     (1) is equivalent to
     \begin{equation}
         \Delta_{f}^{n}[\boldsymbol{a},\boldsymbol{b}]=\sum_{\epsilon_1,\dots,\epsilon_n\in\{0,1\}}(-1)^{n+\epsilon_1+\dots+\epsilon_n}f\big(\boldsymbol{a}+\boldsymbol{\epsilon}\circ(\boldsymbol{b}-\boldsymbol{a})\big).
     \end{equation}
\end{remark}

The definition of joint derivative is based on that of joint increment. 

\begin{definition}{(joint derivative)}
    Suppose \(f\) is defined in a neighborhood of \(\boldsymbol{x}=(x_1, x_2, \dots, x_n)\in\mathbb R^n\). The joint derivative of \(f\) at \(\boldsymbol{x}\) is defined by 
    \[f^{(n)}(\boldsymbol{x}):=\lim_{h\to 0^+}\frac{\Delta_{f}^{n}[x_1,x_1+h]\times[x_2,x_2+h]\times\dots\times[x_n,x_n+h]}{h^n},\]
    when the limit exists. \\[-0.5em]
\end{definition}

Similarly, joint monotonicity is defined in terms of joint increment. In what follows, we use  \((x_1, x_2, \dots, x_n)\leq(y_1, y_2, \dots, y_n)\) to mean that \(x_i\leq y_i\) holds for all $i=1,\dots,n$.


\begin{definition}{(joint monotonicity, cf. [5, \S 255])}
    Let \(f\) be a function defined on a rectangle \([\boldsymbol{a},\boldsymbol{b}]\subset\mathbb R^n\). If for any \(\boldsymbol{x}, \boldsymbol{y}\in [\boldsymbol{a},\boldsymbol{b}]\) with \(\boldsymbol{x}\leq\boldsymbol{y}\), we have \(\Delta_{f}^{n}[\boldsymbol{x},\boldsymbol{y}] \geq 0 \), then \(f\) is called a jointly monotone increasing function on \([\boldsymbol{a},\boldsymbol{b}]\). \\[-0.5em]
\end{definition}

We now recall the notion of bounded variation in the sense of Hardy and Krause (cf. [1]). 

\begin{definition}{(bounded variation)}
    Let \(f\) be a function defined on the rectangle \([\boldsymbol{a},\boldsymbol{b}]=[a_1,b_1]\times\dots\times[a_n, b_n]\subset\mathbb R^n\). If over all finite partitions
    \[\Delta:a_i =x_{i0}<x_{i1}<\dots<x_{iN}=b_i\quad(i=1,\dots,n),\] 
    denoting by \(\{I_i\}_{i=1}^{N^n}\) the sub-rectangles generated by $\Delta$, we have
    \begin{equation}
            \bigvee\limits_{\boldsymbol{a}}^{\boldsymbol{b}}(f):=\sup_{\Delta}\sum_{i=1}^{N^n}|\Delta_f^n I_i|<+\infty, 
    \end{equation}
    then \( f\) is said to be of bounded variation on \([\boldsymbol{a},\boldsymbol{b}]\), denoted by \(f\in BV[\boldsymbol{a},\boldsymbol{b}].\) 
    The supremum \(\bigvee\limits_{\boldsymbol{a}}^{\boldsymbol{b}}(f)\) is called the total variation of \(f\) on \([\boldsymbol{a},\boldsymbol{b}]\).\\[-2em]
\end{definition}
\begin{figure}[h!]
    \centering
    \includegraphics[width=8cm]{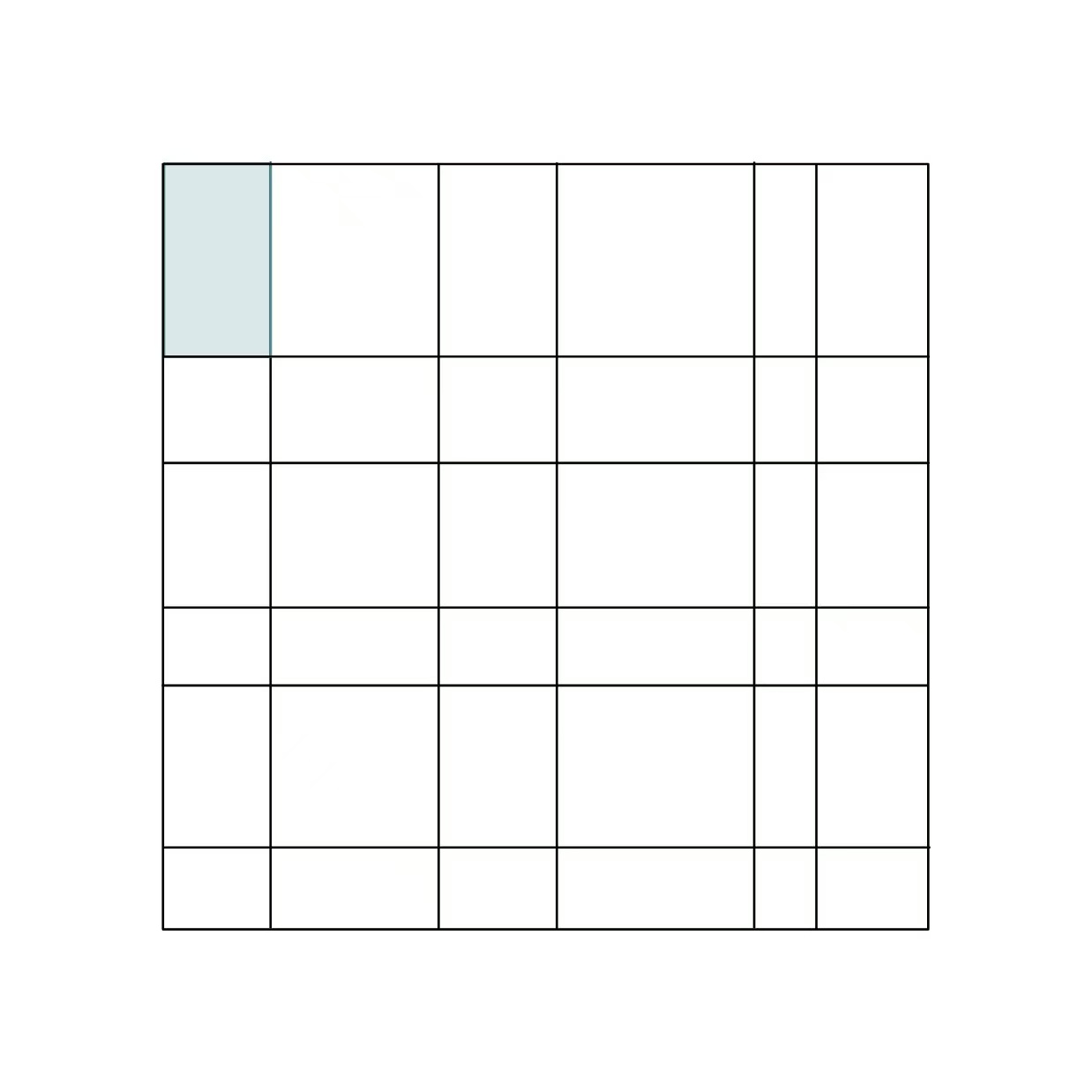}\\[-2em]
    \caption{An illustration of Definition 1.4 in two dimensions. The sum of the absolute values of the joint increments of \(f\) over the sub-rectangles is uniformly bounded.}
\end{figure}

In what follows, we use $m(\cdot)$ to denote the $n$-dimensional Lebesgue measure.

\begin{definition}{(absolute continuity, cf. [5, \S 254])}
    Let \(f\) be a function defined on a rectangle \([\boldsymbol{a},\boldsymbol{b}]\subset\mathbb R^n\). If for any \(\varepsilon>0\), there exists a \(\delta>0\), such that for any finite collection of sub-rectangles \(\{I_i\subset [\boldsymbol{a},\boldsymbol{b}]\}_{i=1}^N\) with \(\mathring{I}_i\cap\mathring{I}_j=\varnothing\ (i\neq j)\), and satisfying
    \[\sum_{i=1}^N m(I_i)<\delta,\]
    we have
    \[\sum_{i=1}^N \big\lvert\Delta^n_f I_i\big\rvert<\varepsilon,\]
    then \(f\) is said to be absolutely continuous on \([\boldsymbol{a},\boldsymbol{b}]\), denoted by \(f\in AC[\boldsymbol{a},\boldsymbol{b}]\). \\[-0.5em]
\end{definition}
\begin{figure}[h!]
    \centering
   \includegraphics[width=8cm]{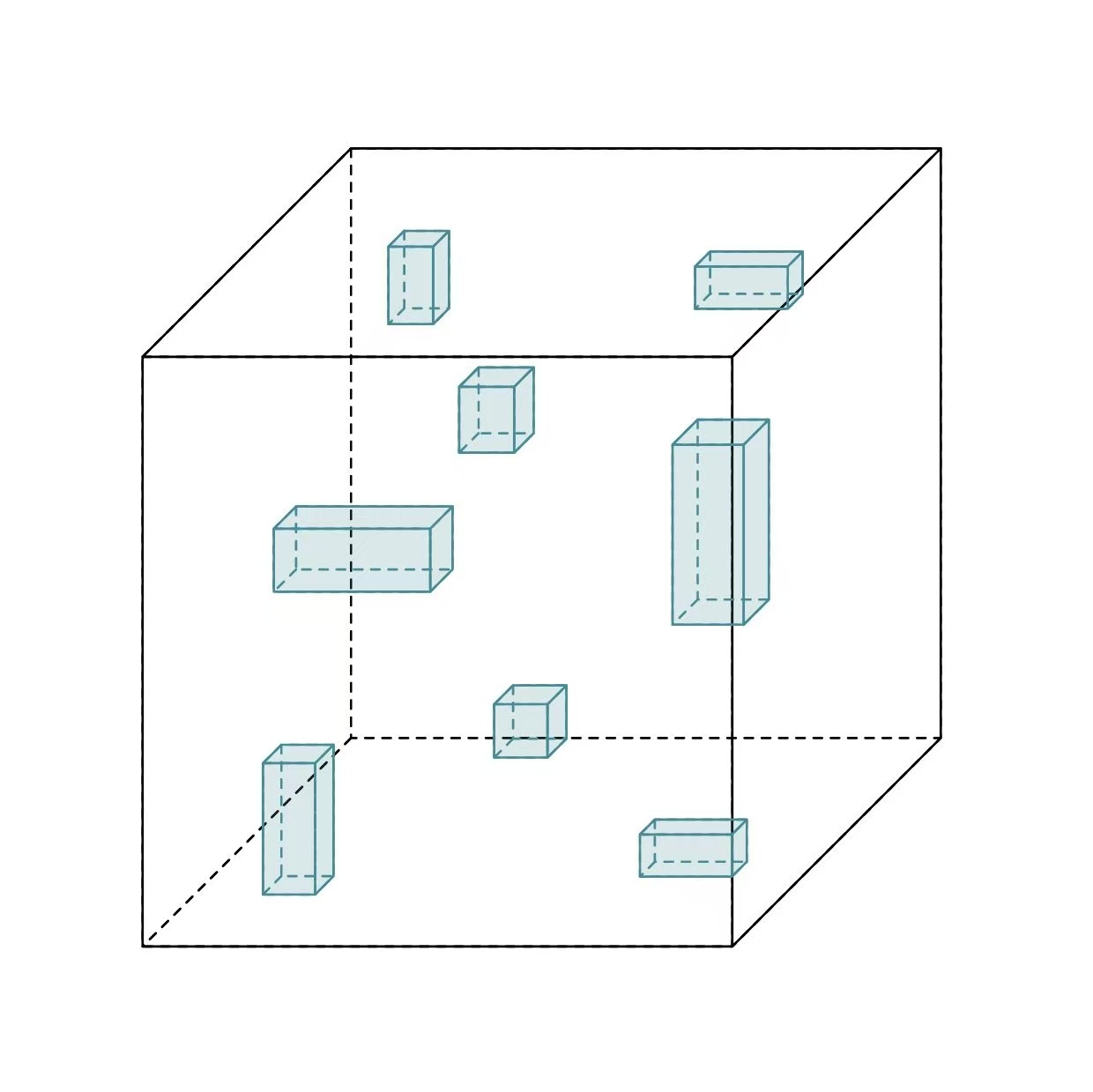}\\[-1.5em]
   \caption{An illustration of Definition 1.5 in three dimensions. Provided that the total volume of the shaded sub-rectangles is sufficiently small, the sum of the absolute values of the joint increments of the function over these sub-rectangles is sufficiently small.} 
\end{figure}

We now state the main results of this work. The proofs are given in Section 2. 

\begin{theorem}{(Lebesgue differentiation theorem for jointly monotone functions, cf. [6])} 
    Let \(f\) be a jointly monotone increasing function on a rectangle \([\boldsymbol{a},\boldsymbol{b}]\subset\mathbb R^n\). Then the joint derivative \(f^{(n)}\) exists almost everywhere in \([\boldsymbol{a},\boldsymbol{b}]\), and satisfies 
    \begin{equation}
        \int_{\boldsymbol{a}}^{\boldsymbol{b}}f^{(n)}(\boldsymbol{x})\diff \boldsymbol{x}\leq\Delta_f^n[\boldsymbol{a},\boldsymbol{b}].
    \end{equation}
\end{theorem}

Corollary 1.3 below shows that equality in (4) holds if and only if \(f\) is absolutely continuous. 

It is implied in Theorem 1.1 that if \(f\) is jointly increasing on $[\boldsymbol{a},\boldsymbol{b}]$, then \(f^{(n)}\) is nonnegative, Lebesgue measurable, and belongs to \(L[\boldsymbol{a},\boldsymbol{b}]\) (the class of Lebesgue integrable functions on $[\boldsymbol{a},\boldsymbol{b}]$). 

The joint derivative can also be considered in other quadrants. This leads to essentially the same definition as Definition 1.2. 

\begin{definition}{(joint derivative in the \(\boldsymbol{\epsilon}\)-th quadrant)}
    Suppose \(f\) is defined in a neighborhood of \(\boldsymbol{x}\in\mathbb R^n\). For \(\boldsymbol{\epsilon}=(\epsilon_1,\dots,\epsilon_n)\), where \(\epsilon_i\in\{-1, 1\}\), define 
    \[D_{\boldsymbol{\epsilon}}f(\boldsymbol{x}):=\lim_{h\to 0^+} \frac{\Delta_f^n[\boldsymbol{x}, \boldsymbol{x}+h\boldsymbol{\epsilon}]}{h^n}\]
    when the limit exists. \\[-0.5em]
\end{definition}

Note that \([x_i,x_i+\epsilon_i h]\) is understood as \([x_i+\epsilon_i h,x_i]\) when \(\epsilon_i=-1\).
    
\begin{proposition}
    Suppose \(f\) is a jointly monotone increasing function on a rectangle \([\boldsymbol{a},\boldsymbol{b}]\subset\mathbb R^n\). Then for any \(\boldsymbol{\epsilon}\) as above, \(D_{\boldsymbol{\epsilon}}f(\boldsymbol{x})\) exists almost everywhere on \([\boldsymbol{a},\boldsymbol{b}]\). Moreover, for any \(\boldsymbol{\epsilon}\neq\boldsymbol{\epsilon}'\), we have
    \begin{equation}
        D_{\boldsymbol{\epsilon}}f(\boldsymbol{x})=D_{\boldsymbol{\epsilon'}}f(\boldsymbol{x}),\quad \text{a.e. }\boldsymbol{x}\in[\boldsymbol{a},\boldsymbol{b}].
    \end{equation}
\end{proposition}

As an application of Theorem 1.1, we have the following corollary, which can be used to characterize when a jointly monotone function is absolutely continuous. 

\begin{corollary}
    Let \(f\) be a jointly monotone increasing function on a rectangle \([\boldsymbol{a},\boldsymbol{b}]\subset\mathbb R^n\). Then \(f\in AC[\boldsymbol{a},\boldsymbol{b}]\) if and only if
    \[\int_{\boldsymbol{a}}^{\boldsymbol{b}}f^{(n)}(\boldsymbol{x})\diff\boldsymbol{x}=\Delta_f^n[\boldsymbol{a},\boldsymbol{b}].\]
\end{corollary}

Note that the last condition only requires the equality to hold on \([\boldsymbol{a},\boldsymbol{b}]\); it will automatically imply that the same equality holds on any sub-rectangle \([\boldsymbol{c},\boldsymbol{d}]\subset[\boldsymbol{a},\boldsymbol{b}]\), that is, 
    \[\int_{\boldsymbol{c}}^{\boldsymbol{d}}f^{(n)}(\boldsymbol{x})\diff\boldsymbol{x}=\Delta_f^n[\boldsymbol{c},\boldsymbol{d}].\]

In order to extend Theorem 1.1 to functions of bounded variations, we will use the following Jordan decomposition theorem (for the two-dimensional case, see [1, Theorem 5]). 

\begin{theorem}{(Jordan decomposition theorem)} Let $f$ be a function defined on a rectangle \([\boldsymbol{a},\boldsymbol{b}]\subset\mathbb R^n\). Then 
     \(f\in BV[\boldsymbol{a},\boldsymbol{b}]\) if and only if there exist two jointly monotone increasing functions \(g\) and \(h\) on \([\boldsymbol{a},\boldsymbol{b}]\), such that for any \(\boldsymbol{x}\in[\boldsymbol{a},\boldsymbol{b}]\), 
    \begin{equation}
      f(\boldsymbol{x})=g(\boldsymbol{x})-h(\boldsymbol{x}).  
    \end{equation}
\end{theorem}

In view of Theorem 1.1 and Proposition 1.2, an immediate consequence of Theorem 1.4 is that if \(f\in BV[\boldsymbol{a},\boldsymbol{b}]\), then \(f^{(n)}\) exists almost everywhere in \([\boldsymbol{a},\boldsymbol{b}]\); moreover, the joint derivatives of \(f\) in all quadrants exist and coincide almost everywhere. 

We now state the first main result of this work. 

\begin{theorem}{(Lebesgue differentiation theorem for BV functions)}
    Let \([\boldsymbol{a},\boldsymbol{b}]\subset\mathbb R^n\) be a rectangle and let \(f\in BV[\boldsymbol{a},\boldsymbol{b}]\). Then the joint derivative \(f^{(n)}\) exists almost everywhere in \([\boldsymbol{a},\boldsymbol{b}]\), and satisfies 
    \begin{equation}
        \int_{\boldsymbol{a}}^{\boldsymbol{b}}\lvert f^{(n)}(\boldsymbol{x})\rvert\diff \boldsymbol{x}\leq\bigvee_{\boldsymbol{a}}^{\boldsymbol{b}}(f).
    \end{equation}
\end{theorem}

Similar to Theorem 1.1, equality in (7) holds if and only if \(f\in AC[\boldsymbol{a},\boldsymbol{b}]\). We state this as a corollary. 

\begin{corollary}
    Let \([\boldsymbol{a},\boldsymbol{b}]\subset\mathbb R^n\) be a rectangle and let \(f\in BV[\boldsymbol{a},\boldsymbol{b}]\). Then \(f\in AC[\boldsymbol{a},\boldsymbol{b}]\) if and only if
    \[
        \int_{\boldsymbol{a}}^{\boldsymbol{b}}\lvert f^{(n)}(\boldsymbol{x})\rvert\diff \boldsymbol{x}=\bigvee_{\boldsymbol{a}}^{\boldsymbol{b}}(f).
    \]
\end{corollary}

As an application and extension of Theorem 1.1, our second main result extends the classical Fubini differentiation theorem (cf. [4]) to series of jointly monotone increasing functions. 

\begin{theorem}{(Fubini differentiation theorem for jointly monotone functions)}
    Let \(\{f_i\}_{i=1}^\infty\) be a sequence of jointly monotone increasing functions on a rectangle \([\boldsymbol{a},\boldsymbol{b}]\subset\mathbb R^n\). Suppose for every \(\boldsymbol{x}\in[\boldsymbol{a},\boldsymbol{b}]\), the series \(\sum\limits_{i=1}^{\infty}f_{i}(\boldsymbol{x})\) converges. Then 
    \[\left(\sum_{i=1}^{\infty} f_{i}\right)^{\hspace{-0.4em}(n)}\hspace{-0.4em}(\boldsymbol{x})= \sum_{i=1}^{\infty} f_{i}^{(n)}(\boldsymbol{x}),\quad\text{a.e. }\boldsymbol{x}\in[\boldsymbol{a},\boldsymbol{b}].\]
\end{theorem}

Finally, using Theorem 1.1 and Theorem 1.4, simplified proofs for the following classical theorems can be obtained. The proofs can be found in the Appendix. 

\begin{corollary}{(Lebesgue differentiation theorem, cf. [6])}
    Let \([\boldsymbol{a},\boldsymbol{b}]\subset\mathbb R^n\) be a rectangle and let \(f\in L[\boldsymbol{a},\boldsymbol{b}]\). Then 
    \begin{equation}
        \left(\int_{\boldsymbol{a}}^{\boldsymbol{x}}f(\boldsymbol{u})\diff \boldsymbol{u}\right)^{\hspace{-0.2em}(n)}\hspace{-0.4em}(\boldsymbol{x})=f(\boldsymbol{x}),\quad\text{a.e. }\boldsymbol{x}\in[\boldsymbol{a},\boldsymbol{b}].
    \end{equation}
\end{corollary}
\begin{corollary}{(Fundamental Theorem of Calculus, cf. [8], [3])}
    Let \([\boldsymbol{a},\boldsymbol{b}]\subset\mathbb R^n\) be a rectangle and let \(f\in AC[\boldsymbol{a},\boldsymbol{b}]\). Then 
    \begin{equation}
        \Delta_f^n[\boldsymbol{a},\boldsymbol{b}]=\int_{\boldsymbol{a}}^{\boldsymbol{b}}f^{(n)}(\boldsymbol{x})\diff \boldsymbol{x}.
    \end{equation}
\end{corollary}

\section{Proofs of the Theorems}
\subsection{Proof of Theorem 1.1}
To give a proof of Theorem 1.1, we first recall the following. 

\begin{definition}{(componentwise monotone function, cf. [5, \S 255])}
    Let \(f\) be a function defined on \([\boldsymbol{a},\boldsymbol{b}]\). If for any \(\boldsymbol{x},\boldsymbol{y}\in[\boldsymbol{a},\boldsymbol{b}]\) with $\boldsymbol{x}\leq\boldsymbol{y}$, we have  \(f(\boldsymbol{x})\leq f(\boldsymbol{y})\), then \(f\) is called a componentwise monotone increasing function on \([\boldsymbol{a},\boldsymbol{b}]\). 
\end{definition}
\begin{remark}
    An equivalent definition is as follows: for every \(\boldsymbol{x}\in[\boldsymbol{a},\boldsymbol{b}]\), every \(i\), and every $c, d\in[a_i, b_i]$ with \(c\leq d\), 
    \[f(x_1,\dots,x_{i-1},c,x_{i+1},\dots,x_n)\leq f(x_1,\dots,x_{i-1},d,x_{i+1},\dots,x_n).\]
\end{remark}

\begin{lemma}{(for the two-dimensional case, see [1, Theorem 9])}
    Suppose \(f\) is a jointly monotone increasing function on \([\boldsymbol{a},\boldsymbol{b}]\). Define for \(\boldsymbol{x}\in[\boldsymbol{a},\boldsymbol{b}]\), 
    \begin{equation}
        \begin{aligned}[b]
            \widetilde{f}(\boldsymbol{x})
            :&=\Delta_f^n[\boldsymbol{a},\boldsymbol{x}]\\
            &=\sum_{\epsilon_1,\dots,\epsilon_n\in\{0,1\}}(-1)^{n+\epsilon_1+\dots+\epsilon_n}
            f\big(\boldsymbol{a}+\boldsymbol{\epsilon}\circ(\boldsymbol{x}-\boldsymbol{a})\big). 
        \end{aligned}
    \end{equation}
    Then \(\widetilde{f}\) is a componentwise monotone increasing function on \([\boldsymbol{a},\boldsymbol{b}]\).\\[-0.5em]
\end{lemma}
\begin{proof}
    We only give the proof for the case \(\boldsymbol{a}\leq\boldsymbol{b}\). The case where there exists \(b_i<a_i\) can be proved similarly. 

    
    We now prove that for \(\forall\boldsymbol{c}\leq\boldsymbol{d}\in[\boldsymbol{a},\boldsymbol{b}]\), if there exists \(i\) such that \(c_i=d_i\), then
    \begin{equation}
        \Delta_{f}^n[\boldsymbol{c},\boldsymbol{d}]=0.
    \end{equation}
    The left side of (11) expands to
    \begin{equation}
        \begin{aligned}[b]
        &\Delta_{f}^n[c_1,d_1]\times\dots[c_{i-1},d_{i-1}]\times[c_i,c_i]\times[c_{i+1},d_{i+1}]\times[c_n,d_n]\\
        &=\sum_{\epsilon_1,\dots,\epsilon_{i-1},\epsilon_{i+1},\dots,\epsilon_n\in\{0,1\}}(-1)^{n+\epsilon_1+\dots+\epsilon_{i-1}+0+\epsilon_{i+1}+\dots+\epsilon_n}f\big(\boldsymbol{c}+\boldsymbol{\epsilon}|_{\epsilon_i=0}\circ(\boldsymbol{d}-\boldsymbol{c})\big)\\
        &+\sum_{\epsilon_1,\dots,\epsilon_{i-1},\epsilon_{i+1},\dots,\epsilon_n\in\{0,1\}}(-1)^{n+\epsilon_1+\dots+\epsilon_{i-1}+1+\epsilon_{i+1}+\dots+\epsilon_n}f\big(\boldsymbol{c}+\boldsymbol{\epsilon}|_{\epsilon_i=1}\circ(\boldsymbol{d}-\boldsymbol{c})\big).
        \end{aligned}
    \end{equation}
    Because
    \begin{equation}
        f\big(\boldsymbol{c}+\boldsymbol{\epsilon}|_{\epsilon_i=0}\circ(\boldsymbol{d}-\boldsymbol{c})\big)=f\big(\boldsymbol{c}+\boldsymbol{\epsilon}|_{\epsilon_i=1}\circ(\boldsymbol{d}-\boldsymbol{c})\big).
    \end{equation}
    And because\((-1)^{n+\epsilon_1+\dots+\epsilon_{i-1}+0+\epsilon_{i+1}+\dots+\epsilon_n}=(-1)\cdot(-1)^{n+\epsilon_1+\dots+\epsilon_{i-1}+1+\epsilon_{i+1}+\dots+\epsilon_n}\), 
    therefore from (12) and (13), (11) is proved.
    
    From (10), \(\Delta_{\widetilde{f}}^n[\boldsymbol{a},\boldsymbol{x}]\) can be written as\(\sum\limits_{\epsilon_1,\dots,\epsilon_n\in\{0,1\}}(-1)^{n+\epsilon_1+\dots+\epsilon_n}
            \Delta_f^n[\boldsymbol{a},\boldsymbol{x}]\).
    If there exists \(\epsilon_i=0 (i\neq k)\), from (11) we know
    \[\Delta_f^n[\boldsymbol{a},\boldsymbol{a}+\boldsymbol{\epsilon}\circ(\boldsymbol{x}-\boldsymbol{a})]=0.\]
    Therefore, \(\Delta_{\widetilde{f}}^n[\boldsymbol{a},\boldsymbol{x}]\) can be written as
        \begin{equation}
        \begin{aligned}[b]
            \Delta_{\widetilde{f}}^n[\boldsymbol{a},\boldsymbol{x}]&=
            \sum_{\epsilon_1,\dots,\epsilon_n\in\{0,1\}}(-1)^{n+\epsilon_1+\dots+\epsilon_n}
            \widetilde{f}\big(\boldsymbol{a}+\boldsymbol{\epsilon}\circ(\boldsymbol{x}-\boldsymbol{a})\big)\\
            &=\sum_{\epsilon_1,\dots,\epsilon_n\in\{0,1\}}(-1)^{n+\epsilon_1+\dots+\epsilon_n}
            \Delta_f^n[\boldsymbol{a},\boldsymbol{a}+\boldsymbol{\epsilon}\circ(\boldsymbol{x}-\boldsymbol{a})]\\
            &=0+(-1)^{n+1+\dots+1}\Delta_f^n[\boldsymbol{a},\boldsymbol{x}]\\
            &=\Delta_f^n[\boldsymbol{a},\boldsymbol{x}].
        \end{aligned}
    \end{equation}
    
    Because \(f\) is a jointly monotone increasing function on \([\boldsymbol{a},
    \boldsymbol{b}]\), from (14), we see that \(\widetilde{f}\) is a jointly monotone increasing function on \([\boldsymbol{a},
    \boldsymbol{b}]\).
    
    Next we show that \(\widetilde{f}\) is componentwise monotone increasing on \([\boldsymbol{a},
    \boldsymbol{b}]\). 
    
    For any \(a_k\leq c\leq d\leq b_k\), from (10), we can simplify the following expression to:
    \begin{equation}
        \begin{aligned}[b]
            &\Delta_{\widetilde{f}}^n[a_1,x_1]\times\dots\times[a_{k-1},x_{k-1}]\times[c,d]\times[a_{k+1},x_{k+1}]\times\dots\times[a_n,x_n]\\
            &=\sum_{\epsilon_1,\dots,\epsilon_n\in\{0,1\}}(-1)^{n+\epsilon_1+\dots+\epsilon_n}\widetilde{f}\big(\boldsymbol{a}|_{a_i=c}+\boldsymbol{\epsilon}\circ(\boldsymbol{x}|_{x_i=d}-\boldsymbol{a}|_{a_i=c})\big)\\
            &=\sum_{\epsilon_1,\dots,\epsilon_n\in\{0,1\}}(-1)^{n+\epsilon_1+\dots+\epsilon_n}\Delta_f^n[\boldsymbol{a}|_{a_i=c},\boldsymbol{a}|_{a_i=c}+\boldsymbol{\epsilon}\circ(\boldsymbol{x}|_{x_i=d}-\boldsymbol{a}|_{a_i=c})].
        \end{aligned}
    \end{equation}
    If there exists \(\epsilon_i=0(i\neq k)\), from (11) we know
    \begin{equation}
        \begin{aligned}[b]
            &\Delta_f^n[\boldsymbol{a}|_{a_i=c},\boldsymbol{a}|_{a_i=c}+\boldsymbol{\epsilon}\circ(\boldsymbol{x}|_{x_i=d}-\boldsymbol{a}|_{a_i=c})]\\
            &=\Delta_f^n[a_1,a_1+\epsilon_1(x_1-a_1)]\times\dots\times[a_i,a_i]\times\dots\times[a_n,a_n+\epsilon_n(x_n-a_n)]\\
            &=0.
        \end{aligned}
    \end{equation}
    Therefore from (16), can simplify (15) to:
    \begin{align*}
        &\Delta_{\widetilde{f}}^n[a_1,x_1]\times\dots\times[a_{k-1},x_{k-1}]\times[c,d]\times[a_{k+1},x_{k+1}]\times\dots\times[a_n,x_n]\\
        &=\sum_{\epsilon_1,\dots,\epsilon_n\in\{0,1\}}(-1)^{n+\epsilon_1+\dots+\epsilon_n}\Delta_f^n[\boldsymbol{a}|_{a_i=c},\boldsymbol{a}|_{a_i=c}+\boldsymbol{\epsilon}\circ(\boldsymbol{x}|_{x_i=d}-\boldsymbol{a}|_{a_i=c})]\\
        &=0+\sum_{\epsilon_k\in\{0,1\}}(-1)^{n+1+\dots+1+\epsilon_k+1+\dots+1}\Delta_f^n[\boldsymbol{a}|_{a_i=c},\big(x_1-a_1,\dots,c+\epsilon_k\cdot(d-c),\dots,x_n-a_n\big)]\\
        &=\widetilde{f}(x_1,\dots,x_{k-1},d,x_{k+1},\dots,x_n)-\widetilde{f}(x_1,\dots,x_{k-1},c,x_{k+1},\dots,x_n).
    \end{align*}
    Furthermore, because \(\widetilde{f}\) is a jointly monotone function, so
    \begin{equation}
        \begin{aligned}[b]
            &\Delta_{\widetilde{f}}^n[a_1,x_1]\times\dots\times[a_{k-1},x_{k-1}]\times[c,d]\times[a_{k+1},x_{k+1}]\times\dots\times[a_n,x_n]\\
            &=\widetilde{f}(x_1,\dots,x_{k-1},d,x_{k+1},\dots,x_n)-\widetilde{f}(x_1,\dots,x_{k-1},c,x_{k+1},\dots,x_n)\geq0.
        \end{aligned}      
    \end{equation}

    According to (17) and the definition of componentwise monotone function, prove if \(f(\boldsymbol{x})\) is a jointly monotone increasing function on \([\boldsymbol{a},\boldsymbol{b}]\), then \(\widetilde{f}\) is a componentwise monotone increasing function on \([\boldsymbol{a},\boldsymbol{b}]\).
\end{proof}

In the above proof, it is shown that \(\widetilde{f}\) is monotone increasing in each variable. Similarly, it can be shown that \(\widetilde{f}\) is jointly monotone increasing with respect to part of the variables. 

\begin{lemma}{(componentwise monotone functions are Lebesgue measurable, cf. [2])} 
    Let \(f(\boldsymbol{x})\) be a componentwise monotone function on \([\boldsymbol{a},\boldsymbol{b}]\). Then \(f\) is Lebesgue measurable on \([\boldsymbol{a},\boldsymbol{b}]\).\\[-0.5em]
\end{lemma}

\begin{proof}
    We only give the proof for the case \(\boldsymbol{a}\leq\boldsymbol{b}\) and \(f\) componentwise monotone increasing. In other cases, that is, when there exists \(a_i>b_i\) or \(f\) componentwise monotone decreasing, the conclusion can be similarly proved.

    Since one-dimensional monotone functions being measurable, one-dimensional componentwise monotone functions are measurable. If it has been proved that ($n-1$)-dimensional componentwise monotone functions are measurable on \([\boldsymbol{a},\boldsymbol{b}]\), next we prove that \(f\) is measurable on \([\boldsymbol{a},\boldsymbol{b}]\).

    Because \(a_i\leq b_i\), when \(\lambda\leq f(\boldsymbol{a})\), \(\{\boldsymbol{x}\mid \boldsymbol{x}\in[\boldsymbol{a},\boldsymbol{b}],f(\boldsymbol{x})<\lambda\}=\varnothing\). When \(\lambda>f(\boldsymbol{b})\), \(\{\boldsymbol{x}\mid \boldsymbol{x}\in[\boldsymbol{a},\boldsymbol{b}],f(\boldsymbol{x})<\lambda\}=[\boldsymbol{a},\boldsymbol{b}]\).

    Consider \(f(\boldsymbol{a})<\lambda\leq f(\boldsymbol{b})\), let \((x_1, \dots, x_{n-1}), (a_1, \dots, a_{n-1}), (b_1, \dots, b_{n-1})\) be \(\boldsymbol{x}', \boldsymbol{a}', \boldsymbol{b}'\). Let
    \[g(\boldsymbol{x}')=\sup\{x_n\mid \boldsymbol{x}'\in[\boldsymbol{a}',\boldsymbol{b}'],f(\boldsymbol{x})<\lambda\}.\]

    For \(\forall \boldsymbol{x_1}'<\boldsymbol{x_2}'\), because \(f(\boldsymbol{x})\) is a componentwise monotone increasing function, then there is
    \[g(\boldsymbol{x_1}')\geq g(\boldsymbol{x_2}').\]
    Therefore \(g\) is a componentwise monotone decreasing function on \([\boldsymbol{a}',\boldsymbol{b}']\subset\mathbb{R}^{n-1}\), then \(g\) is a measurable function. According to \([9,\rm{p}.204,Theorem 4.33]\),
    \[G(g)=\{\boldsymbol{x}\mid \boldsymbol{x}'\in[\boldsymbol{a}',\boldsymbol{b}'],0\leq x_n\leq g(\boldsymbol{x'})\}\]
    is a measurable set in \(\mathbb{R}^n\).

    Because
    \begin{equation}
        \{\boldsymbol{x}\mid \boldsymbol{x}'\in[\boldsymbol{a}',\boldsymbol{b}'],0\leq x_n< g(\boldsymbol{x}')\}\subset\{\boldsymbol{x}\mid \boldsymbol{x}\in[\boldsymbol{a},\boldsymbol{b}],f(\boldsymbol{x})<\lambda\},
    \end{equation}
    \begin{equation}
        \{\boldsymbol{x}\mid \boldsymbol{x}\in[\boldsymbol{a},\boldsymbol{b}],f(\boldsymbol{x})<\lambda\}\subset\{\boldsymbol{x}\mid \boldsymbol{x}'\in[\boldsymbol{a}',\boldsymbol{b}'],0\leq x_n\leq g(\boldsymbol{x}')\}.
    \end{equation}
    According to \([9,\rm{p}.204,Corollary4.32]\), we have
    \begin{equation}
        m\{\boldsymbol{x}\mid \boldsymbol{x}'\in[\boldsymbol{a}',\boldsymbol{b}'],x_n=g(\boldsymbol{x}')\}=0.
    \end{equation}
    Combining (18),(19),(20), we get that when \(f(\boldsymbol{a})< \lambda\leq f(\boldsymbol{b})\),
    \[\{\boldsymbol{x}\mid \boldsymbol{x}\in[\boldsymbol{a},\boldsymbol{b}],f(\boldsymbol{x})<\lambda\}\]
    is a measurable set on \([\boldsymbol{a},\boldsymbol{b}]\) and
    \[m\{\boldsymbol{x}\mid \boldsymbol{x}\in[\boldsymbol{a},\boldsymbol{b}],f(\boldsymbol{x})<\lambda\}=mG(g).\]

    By the definition of measurable function, for any \(\lambda\in\mathbb{R}\), \(\{\boldsymbol{x}\mid \boldsymbol{x}\in[\boldsymbol{a},\boldsymbol{b}],f(\boldsymbol{x})<\lambda\}\) is a measurable set, hence \(f\) is proved to be measurable on \([\boldsymbol{a},\boldsymbol{b}]\). 
\end{proof}
Now we prove Theorem 1.1.\\[-0.5em]
    
\begin{proof}
    Let \(f(\boldsymbol{x})\) be defined in some neighborhood of \(\boldsymbol{x}\in(\boldsymbol{a},\boldsymbol{b})\), let
    \[D^{\boldsymbol{1}}f(\boldsymbol{x})=\varlimsup_{h\to0^{+}}\frac{\Delta_f^n[\boldsymbol{x},\boldsymbol{x}+h]}{h^n},\]
    \[D_{\boldsymbol{1}}f(\boldsymbol{x})=\varliminf_{h\to0^{+}}\frac{\Delta_f^n[\boldsymbol{x},\boldsymbol{x}+h]}{h^n}.\]
    Because \(f\) is a jointly monotone increasing function, obviously
    \[D^{\boldsymbol{1}}f(\boldsymbol{x})\geq D_{\boldsymbol{1}}f(\boldsymbol{x})\geq0.\]
    \(f\) is differentiable at \(\boldsymbol{x}\), if and only if \(D^{\boldsymbol{1}}f(\boldsymbol{x})=D_{\boldsymbol{1}}f(\boldsymbol{x})\) is a finite value. 

    What needs to be proved is, except a zero measure set of \([\boldsymbol{a},\boldsymbol{b}]\), there is
    \begin{equation}
        D^{\boldsymbol{1}}f(\boldsymbol{x})=D_{\boldsymbol{1}}f(\boldsymbol{x}).
    \end{equation}
    This only needs to prove \(m(E)=0\), where
    \[E=\left\{\boldsymbol{x}\mid \boldsymbol{x}\in(\boldsymbol{a},\boldsymbol{b}),D^{\boldsymbol{1}}f(\boldsymbol{x})>D_{\boldsymbol{1}}f(\boldsymbol{x})\right\}.\]

    Since \(f\) is jointly monotone increasing, \(D^{\boldsymbol{1}}f(\boldsymbol{x})\) and \(D_{\boldsymbol{1}}f(\boldsymbol{x})\) are nonnegative. Decompose
    \[A=\bigcup_{r,s\in\mathbb{Q}^{+}}\left\{\boldsymbol{x}\mid \boldsymbol{x}\in(\boldsymbol{a},\boldsymbol{b}),D^{\boldsymbol{1}}f(\boldsymbol{x})>r>s>D_{\boldsymbol{1}}f(\boldsymbol{x})\right\},\]
    where \(\mathbb{Q}^+\) is the set of all positive rational numbers.
    Let
    \[A_{rs}=\left\{\boldsymbol{x}\mid \boldsymbol{x}\in(\boldsymbol{a},\boldsymbol{b}),D^{\boldsymbol{1}}f(\boldsymbol{x})>r>s>D_{\boldsymbol{1}}f(\boldsymbol{x})\right\}.\]
    Then, if we prove that \(\forall r,s\in\mathbb{Q}^{+}\), we have \(m(A_{rs})=0\), it follows from the countability of \(\boldsymbol{Q}^{+}\) that \(m(A)=m(E)=0\).
    
    Proof by contradiction. Suppose not, i.e., \(m^{*}(A_{rs})>0\). Construct an open set \(G\) such that \(A_{rs}\subset G\subset[\boldsymbol{0},\boldsymbol{a}]\), and \(m(G)<(1+\varepsilon)m^{*}(A_{rs})\). \(\forall \boldsymbol{x}\in A_{rs}\), from \(D_{\boldsymbol{1}}f(\boldsymbol{x})<s\), we can take a sufficiently small positive number \(h\) such that
    \begin{equation}
        \frac{\Delta_f^n[\boldsymbol{x},\boldsymbol{x}+h]}{h^n}<s.
    \end{equation}
    Assume without loss of generality that \([\boldsymbol{x},\boldsymbol{x}+h]\subset G\). Thus, the collection of all such \([\boldsymbol{x},\boldsymbol{x}+h]\) forms a Vitali covering of \(A_{rs}\). Therefore, according to the Vitali covering theorem, there exist closed rectangles \([\boldsymbol{x_1},\boldsymbol{x_1}+h_1],\dots,[\boldsymbol{x_N},\boldsymbol{x_N}+h_N]\) with disjoint interiors such that
    \[m^{*}\left(A_{rs}\backslash\bigcup_{i=1}^{N}[\boldsymbol{x_i},\boldsymbol{x_i}+h_i]\right)<\varepsilon,\]
    hence 
    \[m^{*}\left(A_{rs}\cap\bigcup_{i=1}^{N}[\boldsymbol{x_i},\boldsymbol{x_i}+h_i]\right)>m^{*}(A_{rs})-\varepsilon,\]
    and 
    \[\sum_{i=1}^{N}h^n\leqslant m(G)<(1+\varepsilon)m^{*}(A_{rs}).\]
    From (22), 
    \[\Delta_f^n[\boldsymbol{x_i},\boldsymbol{x_i}+h_i]<sh^n(i=1,2,\dots,N).\]
    Therefore, 
    \begin{equation}
        \sum_{i=1}^{N}\Delta_f^n[\boldsymbol{x_i},\boldsymbol{x_i}+h_i]<s\sum_{i=1}^{N}h^n<s(1+\varepsilon)m^{*}(A_{rs}).
    \end{equation}
    
    Let
    \[B_{rs}=A_{rs}\cap\bigcup_{i=1}^{N}(\boldsymbol{x_i},\boldsymbol{x_i}+h_i).\]
    Similar to before, for any \(\boldsymbol{x}\in B_{rs}\), we have \(D^{\boldsymbol{1}}f(\boldsymbol{x})>r\). Therefore, we can take a sufficiently small positive number \(k\) such that \([\boldsymbol{x},\boldsymbol{x}+k]\) is contained within some \([\boldsymbol{x_i},\boldsymbol{x_i}+h_i]\), and
    \begin{equation}
        \frac{\Delta_f^n[\boldsymbol{x},\boldsymbol{x}+k]}{k^n}>r.
    \end{equation}
    The collection of all such \([\boldsymbol{x},\boldsymbol{x}+k]\) forms a Vitali covering of \(B_{rs}\). Again by the Vitali covering theorem, there exists a sequence of closed rectangles \([\boldsymbol{x_j},\boldsymbol{x_j}+k_j](j=1,\dots,m)\) with disjoint interiors such that
    \begin{equation}
        \sum_{j=1}^{m}k_{j}^n>m^{*}(B_{rs})-\varepsilon,
    \end{equation}
    and from (24), 
    \[\Delta_f^n[\boldsymbol{x_j},\boldsymbol{x_j}+k_j]>rk_j^n(j=1,2,\dots,m).\]
    Thus, from (24) and (25), we know 
    \begin{equation}
        \sum_{j=1}^{m}\Delta_f^n[\boldsymbol{x_j},\boldsymbol{x_j}+k_j]
        >r\sum_{j=1}^{m}k_j^n\\
        >r(m^{*}(B_{rs})-\varepsilon)>r(m^{*}(A_{rs})-2\varepsilon).
    \end{equation}
    
    Note that \(f(\boldsymbol{x})\) is jointly monotone increasing on \([\boldsymbol{a},\boldsymbol{b}]\), and \([\boldsymbol{x_j},\boldsymbol{x_j}+k_j]\) is contained within some \([\boldsymbol{x_i},\boldsymbol{x_i}+h_i]\), therefore
    \[\sum_{j=1}^{m}\Delta_f^n[\boldsymbol{x_j},\boldsymbol{x_j}+k_j]\leqslant\sum_{i=1}^{N}\Delta_f^n[\boldsymbol{x_i},\boldsymbol{x_i}+h_i].\]
    Combining (23) and (26), we get 
    \[r(m^{*}(A_{rs})-2\varepsilon)<s(1+\varepsilon)m^{*}(A_{rs}).\]
    By the arbitrariness of \(\varepsilon\), we get \(rm^*(A_{rs})<sm^*(A_{rs})\), hence \(m^*(A)=0\), a contradiction. This proves that \(f\) has a nonnegative derivative \(f^{(n)}(\boldsymbol{x})\)(possibly\(+\infty)\) almost everywhere on \([\boldsymbol{a},\boldsymbol{b}]\).

    We now prove that \(f\) is a measurable function on \([\boldsymbol{a},\boldsymbol{b}]\).

    Let
    \begin{equation}
        \begin{aligned}[b]
            \widetilde{f_N}(\boldsymbol{x})&=N^{n}\Delta_{\widetilde{f}}^n[\boldsymbol{x},\boldsymbol{x}+\frac{1}{N}]\\
            &=N^{n}\sum_{\epsilon_1,\dots,\epsilon_n\in\{0,1\}}(-1)^{n+\epsilon_1+\dots+\epsilon_n}\widetilde{f}(\boldsymbol{x}+\frac{1}{N}\boldsymbol{\epsilon}),
            \boldsymbol{x}\in [\boldsymbol{a},\boldsymbol{b}].
        \end{aligned}
    \end{equation}
    Since \(f\) has \(f^{(n)}\) almost everywhere, then
    \begin{equation}
        \lim_{N\to+\infty}\widetilde{f_N}(\boldsymbol{x})=f^{(n)}(\boldsymbol{x}).
    \end{equation}
    Extend the definition of \(f\) to \([\boldsymbol{a},\boldsymbol{b}+1]\) by defining
    \[f(\boldsymbol{x})=f\big(\boldsymbol{x}+\boldsymbol{1}_{\boldsymbol{x}>\boldsymbol{b}}\circ(\boldsymbol{b}-\boldsymbol{x})\big).\]
    By Lemma 2.1, it is easy to prove that \(f\) is a jointly monotone increasing function on \([\boldsymbol{a},\boldsymbol{b}+1]\), so \(\widetilde{f}\) is a componentwise monotone increasing function on \([\boldsymbol{a},\boldsymbol{b}+1]\). Therefore, by Lemma 2.2, \(\widetilde{f}\) is a measurable function on \([\boldsymbol{a},\boldsymbol{b}+1]\). Because\(\widetilde{f}\) is measurable on \([\boldsymbol{a},\boldsymbol{b}+1]\), from (27), \(\widetilde{f_N}\) is measurable on \([\boldsymbol{a},\boldsymbol{b}]\). Since the limit of a sequence of measurable functions is measurable, and from (28), it is proved that \(f\) is measurable on \([\boldsymbol{a},\boldsymbol{b}]\).

    Measurability implies integrability. Next, prove
    \begin{equation}
        \int_{\boldsymbol{a}}^{\boldsymbol{b}}f^{(n)}(\boldsymbol{x})\diff \boldsymbol{x}\leq 2^{n-1}\Delta_f^n[\boldsymbol{a},\boldsymbol{b}].   
    \end{equation}
    Because \(\widetilde{f}\) is a componentwise monotone increasing function on \([\boldsymbol{a},\boldsymbol{b}+1]\), i.e., for any \(\boldsymbol{c},\boldsymbol{d}\in[\boldsymbol{a},\boldsymbol{b}+1],\boldsymbol{c}\leq\boldsymbol{d}\), we have
    \begin{equation}
        \widetilde{f}(\boldsymbol{c})\leq\widetilde{f}(\boldsymbol{d}).
    \end{equation}
    By Fatou's lemma,
    \begin{equation}
        \begin{aligned}[b]
            \int_{\boldsymbol{a}}^{\boldsymbol{b}}f^{(n)}(\boldsymbol{x})\diff \boldsymbol{x}&=\int_{\boldsymbol{a}}^{\boldsymbol{b}}\varliminf_{N\to+\infty}\widetilde{f_N}(\boldsymbol{x})\diff \boldsymbol{x}\\
            &\leq\varliminf_{N\to+\infty}\int_{\boldsymbol{a}}^{\boldsymbol{b}}
            \widetilde{f_N}(\boldsymbol{x})\diff \boldsymbol{x}\\
            &=\varliminf_{N\to+\infty}N^{n}\int_{\boldsymbol{a}}^{\boldsymbol{b}}
            \Delta_{\widetilde{f}}^n[\boldsymbol{x},\boldsymbol{x}+\frac{1}{N}]\diff \boldsymbol{x}\\
            &=\varliminf_{N\to+\infty}N^n\int_{\boldsymbol{a}}^{\boldsymbol{b}}
            \sum_{\epsilon_1,\dots,\epsilon_n\in\{0,1\}}(-1)^{n+\epsilon_1+\dots+\epsilon_n}\widetilde{f}(\boldsymbol{x}+\frac{1}{N}\boldsymbol{\epsilon})\diff \boldsymbol{x}\\
            &=\varliminf_{N\to+\infty}N^n\left(\sum_{\epsilon_1,\dots,\epsilon_n\in\{0,1\}}(-1)^{n+\epsilon_1+\dots+\epsilon_n} \int_{\boldsymbol{a}}^{\boldsymbol{b}}\widetilde{f}(\boldsymbol{x}+\frac{1}{N}\boldsymbol{\epsilon})\diff \boldsymbol{x}\right)\\
            &=\varliminf_{N\to+\infty}N^n\left(\sum_{\epsilon_1,\dots,\epsilon_n\in\{0,1\}}(-1)^{n+\epsilon_1+\dots+\epsilon_n}
            \int_{\boldsymbol{a}+\frac{1}{N}\boldsymbol{\epsilon}}^{\boldsymbol{b}+\frac{1}{N}\boldsymbol{\epsilon}}\widetilde{f}(\boldsymbol{x})\diff \boldsymbol{x}\right)\\
            &=\varliminf_{N\to+\infty}N^n\left(\sum_{\epsilon_1,\dots,\epsilon_n\in\{0,1\}}(-1)^{n+\epsilon_1+\dots+\epsilon_n}
            \int_{\boldsymbol{a}+\boldsymbol{\epsilon}\circ(\boldsymbol{b}-\boldsymbol{a})}^{\boldsymbol{a}+\boldsymbol{\epsilon}\circ(\boldsymbol{b}-\boldsymbol{a})+\frac{1}{N}}\widetilde{f}(\boldsymbol{x})\diff \boldsymbol{x}\right).\\
        \end{aligned}
    \end{equation}
    \begin{figure}[h]
        \centering
        \subfloat[]{\includegraphics[width=0.4\textwidth]{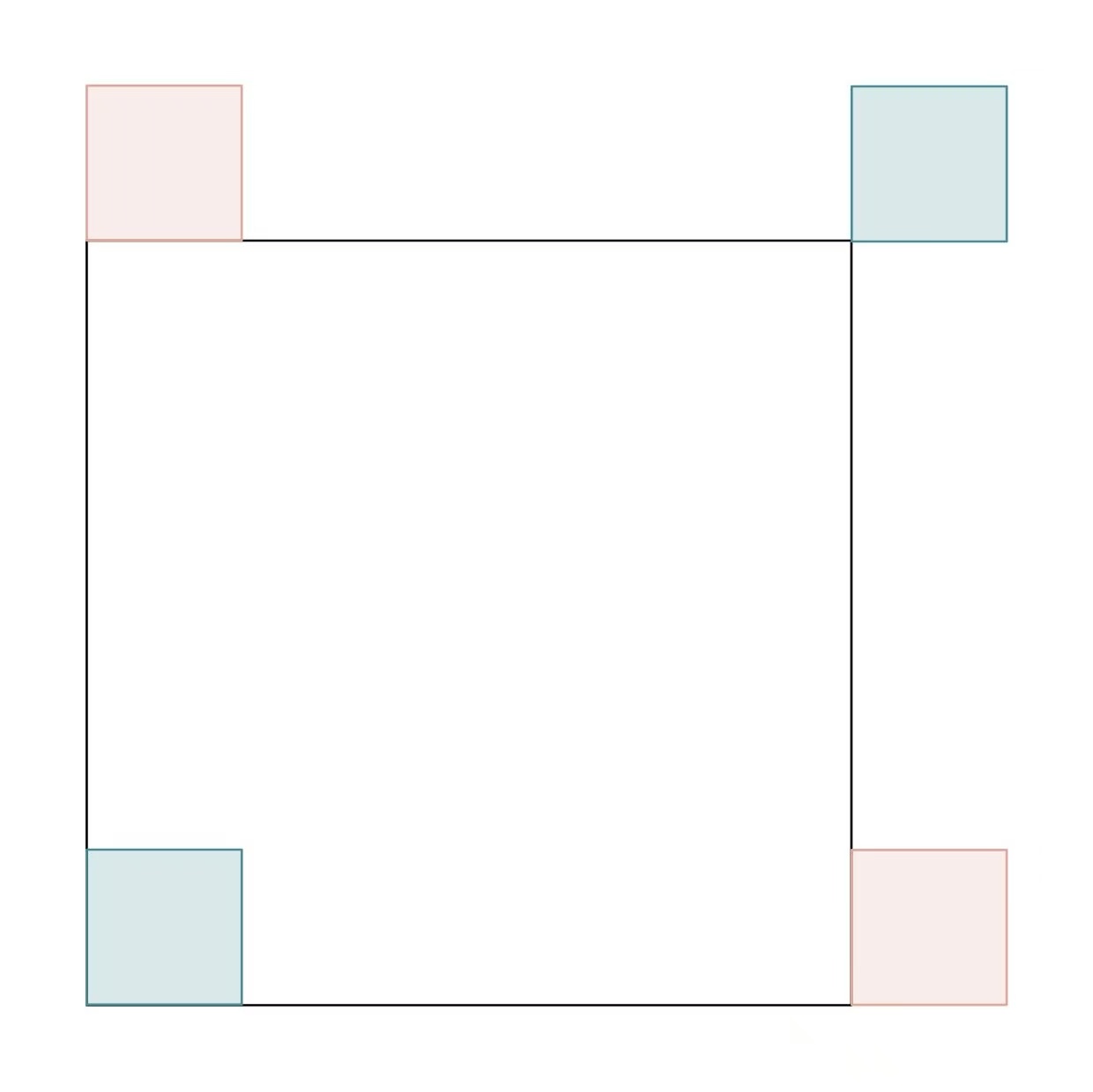}\label{fig:sub1}}
        \hfill
        \subfloat[]{\includegraphics[width=0.4\textwidth]{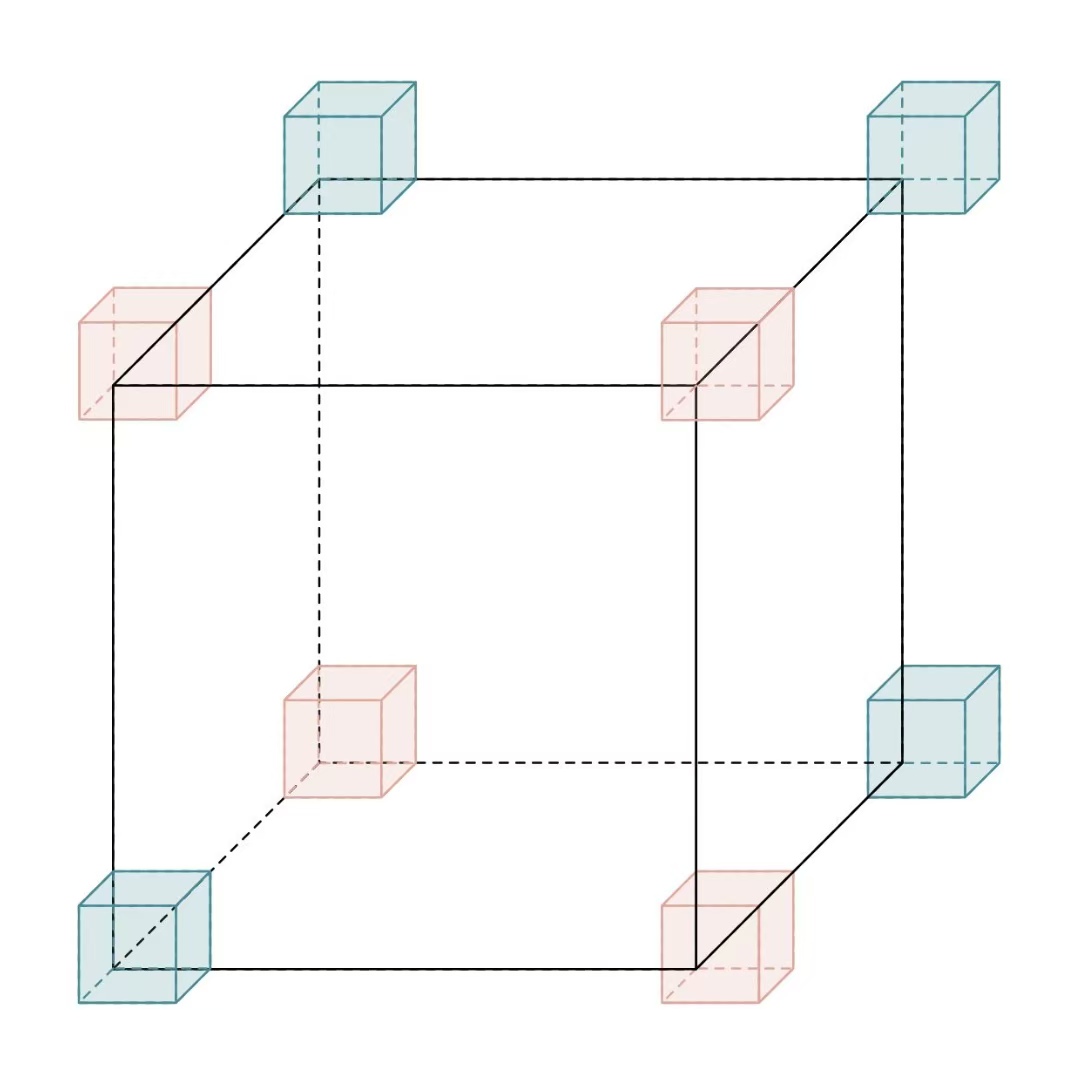}\label{fig:sub2}}
        \caption{(a) An illustration of the two-dimensional case. The side lengths of the four rectangles are \(\frac{1}{N}\); (31) represents the sum of the integrals of \(\widetilde{f}\) over the two blue rectangles minus the sum of the integrals of \(\widetilde{f}\) over the two red rectangles.\linebreak (b) An illustration of the three-dimensional case. The side lengths of the eight rectangles are \(\frac{1}{N}\); (31) represents the sum of the integrals of \(\widetilde{f}\) over the four blue rectangles minus the sum of the integrals of \(\widetilde{f}\) over the four red rectangles.}
        \label{fig:total}
    \end{figure} 
    
    From (30), (31) can be simplified to
    \begin{equation}
        \begin{aligned}[b]
            \int_{\boldsymbol{a}}^{\boldsymbol{b}}f^{(n)}(\boldsymbol{x})\diff \boldsymbol{x}&\leq\varliminf_{N\to+\infty}N^n\left(\sum_{\epsilon_1,\dots,\epsilon_n\in\{0,1\}}(-1)^{n+\epsilon_1+\dots+\epsilon_n}
            \int_{\boldsymbol{\boldsymbol{a}+\epsilon}\circ(\boldsymbol{b}-\boldsymbol{a})}^{\boldsymbol{a}+\boldsymbol{\epsilon}\circ(\boldsymbol{b}-\boldsymbol{a})+\frac{1}{N}}\widetilde{f}(\boldsymbol{x})\diff \boldsymbol{x}\right)\\
        &\leq\varliminf_{N\to+\infty}N^n\left(\frac{1}{N^n}\sum_{n+\epsilon_1+\dots+\epsilon_n\equiv0\pmod2}\widetilde{f}\left(\boldsymbol{a}+\frac{1}{N}+\boldsymbol{\epsilon}\circ(\boldsymbol{b}-\boldsymbol{a}-\frac{1}{N})\right)\right.\\
        &-\left.\frac{1}{N^n}\sum_{n+\epsilon_1+\dots+\epsilon_n\equiv1\pmod2}\widetilde{f}\big(\boldsymbol{a}+\boldsymbol{\epsilon}\circ(\boldsymbol{b}-\boldsymbol{a})\big)\right)\\
        \end{aligned}
    \end{equation}
    
    Because in the term \(\sum\limits_{n+\epsilon_1+\dots+\epsilon_n\equiv1\pmod2}\widetilde{f}\big(\boldsymbol{a}+\boldsymbol{\epsilon}\circ(\boldsymbol{b}-\boldsymbol{a})\big)\) there exists at least one \(\epsilon_i=0\), from  (10) and (11) we know this term is 0. That is, (32) can be simplified to
    \begin{align*}
        \int_{\boldsymbol{a}}^{\boldsymbol{b}}f^{(n)}(\boldsymbol{x})\diff \boldsymbol{x}&\leq\varliminf_{N\to+\infty}N^n\left(\frac{1}{N^n}\sum_{n+\epsilon_1+\dots+\epsilon_n\equiv0\pmod2}\widetilde{f}\left(\boldsymbol{a}+\frac{1}{N}+\boldsymbol{\epsilon}\circ(\boldsymbol{b}-\boldsymbol{a}-\frac{1}{N})\right)-0\right)\\
        &=\varliminf_{N\to+\infty}\sum_{n+\epsilon_1+\dots+\epsilon_n\equiv0\pmod2}\widetilde{f}\left(\boldsymbol{a}+\frac{1}{N}+\boldsymbol{\epsilon}\circ(\boldsymbol{b}-\boldsymbol{a}-\frac{1}{N})\right)\\
        &=\varliminf_{N\to+\infty}\sum_{n+\epsilon_1+\dots+\epsilon_n\equiv0\pmod2}\Delta_f^n[\boldsymbol{a},\boldsymbol{a}+\frac{1}{N}+\boldsymbol{\epsilon}\circ(\boldsymbol{b}-\boldsymbol{a}-\frac{1}{N})]\\
        &\leq 2^{n-1}\Delta_f^n[\boldsymbol{a},\boldsymbol{b}].
    \end{align*}
    Thus, (29) is proved.
   
    Now prove equation (4). Extend \(f\) from \([\boldsymbol{a},\boldsymbol{b}]\) to \([\boldsymbol{a}-1,\boldsymbol{b}]\) by defining
    \[f(\boldsymbol{x})=f\big(\boldsymbol{a}+\boldsymbol{1}_{\boldsymbol{}\boldsymbol{x}>\boldsymbol{a}}\circ(\boldsymbol{x}-\boldsymbol{a})\big),\boldsymbol{x}\in[\boldsymbol{a}-1,\boldsymbol{b}].\]
    From (29), we know \(f^{(n)}(\boldsymbol{x})\) is integrable on \([\boldsymbol{a},\boldsymbol{b}]\). It is easy to prove that \(f^{(n)}\) is identically 0 on \([\boldsymbol{a}-1,\boldsymbol{b}]\setminus [\boldsymbol{a},\boldsymbol{b}]\), so \(f^{(n)}\) is also integrable on \([\boldsymbol{a}-1,\boldsymbol{b}]\), and its integral equals the integral on\([\boldsymbol{a},\boldsymbol{b}]\). That is,
    \begin{align*}
        \int_{\boldsymbol{a}-1}^{\boldsymbol{b}}f^{(n)}(\boldsymbol{x})\diff \boldsymbol{x}&=\int_{\boldsymbol{a}}^{\boldsymbol{b}}f^{(n)}(\boldsymbol{x})\diff \boldsymbol{x}\\
        &\leq \varliminf_{N\to+\infty}\sum_{n+\epsilon_1+\dots+\epsilon_n\equiv0\pmod2}\Delta_f^n[\boldsymbol{a}-1,\boldsymbol{a}-1+\frac{1}{N}+\boldsymbol{\epsilon}\circ(\boldsymbol{b}-\boldsymbol{a}+1-\frac{1}{N})]\\
        &=0+\varliminf_{N\to+\infty}\Delta_f^n[\boldsymbol{a}-1,\boldsymbol{a}-1+\frac{1}{N}+\boldsymbol{1}\circ(\boldsymbol{b}-\boldsymbol{a}+1-\frac{1}{N})]\\
        &=\Delta_f^n[\boldsymbol{a}-1,\boldsymbol{b}]\\
        &=\Delta_f^n[\boldsymbol{a},\boldsymbol{b}].
    \end{align*}
    Thus, it is proved that
    \[\int_{\boldsymbol{a}}^{\boldsymbol{b}}f^{(n)}(\boldsymbol{x})\diff \boldsymbol{x}\leq \Delta_f^n[\boldsymbol{a},\boldsymbol{b}].\]
\end{proof}

\subsection{Proof of Proposition 1.2}
\begin{proof}
    First, we show that the Dini derivatives exist almost everywhere in each octant.
    
    The existence almost everywhere of the Dini derivative in the \(\boldsymbol{1}=(1,\dots,1)\) octant, i.e.,
    \[\varlimsup_{h\to0^+}\frac{\Delta_f^n[\boldsymbol{x,\boldsymbol{x}+h\boldsymbol{1}}]}{h^n}=\varliminf_{h\to0^+}\frac{\Delta_f^n[\boldsymbol{x,\boldsymbol{x}+h\boldsymbol{1}}]}{h^n}.\]
    has been given in the proof of Theorem 1.1 and can be proved by (21). The existence almost everywhere of the Dini derivatives in the other octants can be proved similarly.

    We now prove that the Dini derivatives in all octants are almost everywhere equal.

    We only give the proof for \(D_{1,1}f(x,y)=D_{-1,1}f(x,y)\) in the two-dimensional case, i.e., the Dini derivative in the first quadrant is almost everywhere equal to the Dini derivative in the second quadrant. The almost everywhere equality of the first quadrant Dini derivative with the third and fourth quadrant Dini derivatives in two dimensions can be proved similarly. The high-dimensional case can be proved similarly.

    Take the two-dimensional case as an example. We prove that for almost every \((x,y)\in[a_1,b_1]\times[a_2,b_2],D_{1,1}f(x,y)=D_{-1,1}f(x,y)\).
    \begin{align*}
        D_{1,1}f(x,y)=\lim_{h\to0^{+}}\frac{\Delta_f^2[x,x+h]\times[y,y+h]}{h^2},\\
        D_{-1,1}f(x,y)=\lim_{h\to0^{+}}\frac{\Delta_f^2[x-h,x]\times[y,y+h]}{h^2}.
    \end{align*}
    This only requires proving \(m(E_1)=m(E_2)=0\), where
    \[E_1=\left\{(x,y)\mid(x,y)\in(a_1,b_1)\times(a_2,b_2), D_{1,1}f(x,y)>D_{-1,1}f(x,y)\right\}.\]
    \[E_2=\left\{(x,y)\mid(x,y)\in(a_1,b_1)\times(a_2,b_2), D_{1,1}f(x,y)<D_{-1,1}f(x,y)\right\}.\]
    
    Decompose
    \[A=\bigcup_{r,s\in\mathbb{Q}^{+}}\left\{(x,y)\mid (x,y)\in(a_1,b_1)\times(a_2,b_2),D_{1,1}f(x,y)>r>s>D_{-1,1}f(x,y)\right\},\]
    where \(Q^+\) is the set of all positive rational numbers. Let
    \[A_{rs}=\left\{(x,y)\mid (x,y)\in(a_1,b_1)\times(a_2,b_2),D_{1,1}f(x,y)>r>s>D_{-1,1}f(x,y)\right\}.\]
    Then, if we prove that  \(\forall r,s\in\mathbb{Q}^{+}\), we have \(m(A_{rs})=0\), it follows from the countability of \(\mathbf{Q}^{+}\) that \(m(A)=m(E_1)=0\).
    
    Proof by contradiction. Suppose not, i.e., \(m^{*}(A_{rs})>0\). Construct an open set \(G\) such that \(A_{rs}\subset G\subset[\boldsymbol{a},\boldsymbol{b}]\), and \(m(G)<(1+\varepsilon)m^{*}(A_{rs})\).
    \(\forall (x,y)\in A_{rs}\), from \(D_{-1,1}f(x,y)<s\), we can take a sufficiently small positive number \(h\) such that
    \begin{equation}
        \frac{\Delta_f^2[x-h,x]\times[y,y+h]}{h^2}<s.
    \end{equation}
    Assume without loss of generality that \([x-h,x]\times[y,y+h]\subset G\). Thus, the collection of all such \([x-h,x]\times[y,y+h]\) forms a Vitali covering of \(A_{rs}\). Therefore, according to the Vitali covering theorem, there exist rectangles 
    \([x_1-h_1,x_1]\times[y_1,y_1+h_1],\dots,[x_N-h_N,x_N]\times[y_N,y_N+h_N]\) with disjoint interiors such that
    \[m^{*}\left(A_{rs}\backslash\bigcup_{i=1}^{N}[x_i-h_i,x_i]\times[y_i,y_i+h_i]\right)<\varepsilon,\]
    hence
    \[m^{*}\left(A_{rs}\cap\bigcup_{i=1}^{N}[x_i-h_i,x_i]\times[y_i,y_i+h_i]\right)>m^{*}(A_{rs})-\varepsilon,\]
    and
    \[\sum_{i=1}^{N}h_i^2\leqslant m(G)<(1+\varepsilon)m^{*}(A_{rs}).\]
    From (33),
    \[\Delta_f^2[x_i-h_i,x_i],\times[y_i,y_i+h_i]<sh_i^2(i=1,2,\dots,N).\]
    Therefore,
    \begin{equation}
        \sum_{i=1}^{N}\Delta_f^2[x_i-h_i,x_i]\times[y_i,y_i+h_i]<s\sum_{i=1}^{N}h_i^2<s(1+\varepsilon)m^{*}(A_{rs}).
    \end{equation}
    
    Let
    \[B_{rs}=A_{rs}\cap\bigcup_{i=1}^{N}(x_i-h_i,x_i)\times(y_i,y_i+h_i).\]
    Similar to before, for any \((x,y)\in B_{rs}\), we have \(D_{1,1}f(x,y)>r\). 
    Therefore, for any \((x,y)\in B_{rs}\), we can take a sufficiently small positive number \(k\) such that \([x,x+k]\times[y,y+k]\) is contained within some \([x_i-h_i,x_i]\times[y_i,y_i+h_i]\), and
    \begin{equation}
        \frac{\Delta_f^2[x,x+k]\times[y,y+k]}{k^2}>r.
    \end{equation}
    The collection of all such \([x,x+k]\times[y,y+k]\) forms a Vitali covering of \(B_{rs}\).

    Again by the Vitali covering theorem, there exists a sequence of rectangles 
    \([x_j,x_j+k_j]\times[y_j,y_j+k_j](j=1,\dots,m)\) with disjoint interiors such that 
    \begin{equation}
        \sum_{j=1}^{m}k_{j}^2>m^{*}(B_{rs})-\varepsilon,
    \end{equation}
    and from (35), 
    \[\Delta_f^2[x_j,x_j+k_j]\times[y_j,y_j+k_j]>rk_j^2(j=1,2,\dots,m).\]
    Thus, from (35) and (36), we know 
    \begin{equation}
        \sum_{j=1}^{m}\Delta_f^2[x_j,x_j+k_j]\times[y_j,y_j+k_j]
        >r\sum_{j=1}^{m}k_j^2\\
        >r(m^{*}(B_{rs})-\varepsilon)>r(m^{*}(A_{rs})-2\varepsilon).
    \end{equation}
    Note that \(f(x,y)\) is jointly monotone increasing on \([a_1,b_1]\times[a_2,b_2]\), and \([x_j,x_j+k_j]\times[y_j,y_j+k_j]\) is contained within some \([x_i-h_i,x_i]\times[y_i,y_i+h_i]\), therefore
    \[\sum_{j=1}^{m}\Delta_f^2[x_j,x_j+k_j]\times[y_j,y_j+k_j]\leqslant\sum_{i=1}^{N}\Delta_f^2[x_i-h_i,x_i]\times[y_i,y_i+h_i].\]
    Combining (34) and (37), we get
    \[r(m^{*}(A_{rs})-2\varepsilon)<s(1+\varepsilon)m^{*}(A_{rs}).\]
    By the arbitrariness of \(\varepsilon\), simplifying gives \(rm^*(A_{rs})<sm^*(A_{rs})\), a contradiction. Hence \(m^*(A_{rs})=0\), which proves \(m(A)=m(E_1)=0\).

    It can be similarly proved that \(m(E_2)=0\). From \(m(E_1)=m(E_2)=0\), it is proved that \(D_{1,1}f(x,y)\) equals \(D_{-1,1}f(x,y)\) almost everywhere on \([a_1,b_1]\times[a_2,b_2]\).
\end{proof}

\subsection{Proof of Corollary 1.3}
The necessity follows directly from Corollary 1.9. So we only prove the sufficiency, that is, to prove: If a jointly monotone increasing function \(f\) on \([\boldsymbol{a},\boldsymbol{b}]\) satisfies
\begin{equation}
    \int_{\boldsymbol{a}}^{\boldsymbol{b}}f^{(n)}(\boldsymbol{x})\diff\boldsymbol{x}=\Delta_f^n[\boldsymbol{a},\boldsymbol{b}],
\end{equation}
then \(f\in AC[\boldsymbol{a},\boldsymbol{b}]\).

\begin{proof}
    We only give the proof for the two-dimensional case. The high-dimensional case can be proved similarly.
    
    We prove that for any \((x,y)\in[a_1,b_1]\times[a_2,b_2]\), we have
    \begin{equation}
        \int_{a_1}^{x}\int_{a_2}^{y}f^{(2)}\diff x\diff y=\Delta_f^2[a_1,x]\times[a_2,y].
    \end{equation}
    We know
    \begin{equation}
        \begin{aligned}[b]
            &\int_{a_1}^{x}\int_{a_2}^{y}f^{(2)}\diff x\diff y\\
            &=
            \int_{a_1}^{b_1}\int_{a_2}^{b_2}f^{(2)}\diff x\diff y-
            \int_{a_1}^{x}\int_{y}^{b_2}f^{(2)}\diff x\diff y-
            \int_{x}^{b_1}\int_{a_2}^{y}f^{(2)}\diff x\diff y-
            \int_{x}^{b_1}\int_{y}^{b_2}f^{(2)}\diff x\diff y.
        \end{aligned}
    \end{equation}
    From (38) and Theorem 1.1, (40) can be bounded by:
    \begin{equation}
        \begin{aligned}[b]
            &\int_{a_1}^{x}\int_{a_2}^{y}f^{(2)}\diff x\diff y\\
            &\geq\Delta_f^2[a_1,b_1]\times[a_2,b_2]-\Delta_f^2[a_1,x]\times[y,b_2]-\Delta_f^2[x,b_1]\times[a_2,y]-\Delta_f^2[x,b_1]\times[y,b_2]\\
            &=\Delta_f^2[a_1,x]\times[a_2,y].
        \end{aligned}
    \end{equation}
    From Theorem 1.1, we know
    \begin{equation}
        \int_{a_1}^{x}\int_{a_2}^{y}f^{(2)}\diff x\diff y\leq\Delta_f^2[a_1,x]\times[a_2,y].
    \end{equation}
    Combining (41) and (42), (39) is proved.

    We now prove that for any \(x_1\leq x_2\in[a_1,a_2],y_1\leq y_2\in[b_1,b_2]\), we have
    \begin{equation}
        \int_{x_1}^{x_2}\int_{y_1}^{y_2}f^{(2)}\diff x\diff y=\Delta_f^2[x_1,x_2]\times[y_1,y_2].
    \end{equation}
    From (39), (43) can be proved:
    \begin{align*}
        &\int_{x_1}^{x_2}\int_{y_1}^{y_2}f^{(2)}\diff x\diff y\\
        &=\int_{a_1}^{x_2}\int_{a_2}^{y_2}f^{(2)}\diff x\diff y-\int_{a_1}^{x_1}\int_{a_2}^{y_2}f^{(2)}\diff x\diff y-
            \int_{a_1}^{x_2}\int_{a_2}^{y_1}f^{(2)}\diff x\diff y+
            \int_{a_1}^{x_1}\int_{a_2}^{y_1}f^{(2)}\diff x\diff y\\
        &=\Delta_f^2[a_1,x_2]\times[a_2,y_2]-\Delta_f^2[a_1,x_1]\times[a_2,y_2]-\Delta_f^2[a_1,x_2]\times[a_2,y_1]+\Delta_f^2[a_1,x_1]\times[a_2,y_1]\\
        &=\Delta_f^2[x_1,x_2]\times[y_1,y_2].
    \end{align*}    

    We now prove that \(f\) is absolutely continuous on the domain. From (43) we know 
    \[\Delta_f^2[x_1,x_2]\times[y_1,y_2]=\int_{x_1}^{x_2}\int_{y_1}^{y_2}f^{(2)}\diff x\diff y.\]
    Since \(f^{(2)}(x,y)\) is an integrable function, by the absolute continuity of integrable functions and the definition of absolute continuity, it can be proved that \(f\) is absolutely continuous on \([a_1,b_1]\times[a_2,b_2]\).
\end{proof}

\subsection{Proof of Theorem 1.4}
Before proving Theorem 1.4, we first give a lemma.
\begin{lemma} 
    If \(f\) is a real-valued function defined on \([\boldsymbol{a},\boldsymbol{b}]\in\mathbb{R}^n\), given \(c_i\in[a_i,b_i](i=1,\dots n)\), let \([a_i,c_i]\) be \(I_{i0}\), and \([c_i,b_i]\) be \(I_{i1}\), then
    \[I_{1j}\times I_{2j}\times\dots\times I_{nj},j\in\{0,1\}\]
    divides \([\boldsymbol{a},\boldsymbol{b}]\) into \(2^n\) n-dimensional rectangles, satisfying
    \begin{equation}
        \bigvee_{\boldsymbol{a}}^{\boldsymbol{b}}(f)=\sum_{j\in\{0,1\}}\bigvee_{I_{1j}\times\dots\times I_{nj}}(f).
    \end{equation}
\end{lemma}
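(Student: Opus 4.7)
The plan is to prove both inequalities $\bigvee_{\boldsymbol{a}}^{\boldsymbol{b}}(f) \leq \sum_{i} \bigvee_{I_i}(f)$ and $\bigvee_{\boldsymbol{a}}^{\boldsymbol{b}}(f) \geq \sum_{i} \bigvee_{I_i}(f)$ via a standard refinement argument, both resting on a single key fact: if a closed rectangle $I$ is split by a single hyperplane $x_k = c$ into two sub-rectangles $I', I''$, then $\Delta_f^n I = \Delta_f^n I' + \Delta_f^n I''$. I would establish this by applying the vertex-sum formula of Remark 1.2 to all three rectangles and checking that the $2^{n-1}$ new vertices lying on the cutting hyperplane appear once from $I'$ and once from $I''$ with opposite signs (because they serve as the ``right endpoint'' of $I'$ and the ``left endpoint'' of $I''$ in direction $k$) and hence cancel, while the $2^n$ original vertices of $I$ are split evenly between $I'$ and $I''$ with the correct signs. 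The triangle inequality then yields $|\Delta_f^n I| \leq |\Delta_f^n I'| + |\Delta_f^n I''|$, and by iteration this shows that refining a product partition by inserting additional cuts in any coordinate direction can only weakly increase the associated sum $\sum|\Delta_f^n I_i|$.

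For the direction $\leq$, I would take an arbitrary product partition $\Delta$ of $[\boldsymbol{a},\boldsymbol{b}]$ and refine it to $\Delta^{\ast}$ by adjoining the cut point $c_i$ (if not already present) in each coordinate direction $i$. The refinement fact gives $\sum_\Delta |\Delta_f^n I| \leq \sum_{\Delta^{\ast}} |\Delta_f^n I|$, and $\Delta^{\ast}$ restricts to a product partition of each of the $2^n$ sub-rectangles $l_{1j_1}\times\cdots\times l_{nj_n}$, so its total sum splits into $2^n$ pieces each bounded by the corresponding $\bigvee_{l_{1j_1}\times\cdots\times l_{nj_n}}(f)$. Taking the supremum over $\Delta$ yields the desired inequality. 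For the direction $\geq$, given $\varepsilon>0$, I would choose for each sub-rectangle a product partition whose variation sum exceeds its total variation minus $\varepsilon/2^n$, then form the common product partition $\Delta^{\ast}$ of $[\boldsymbol{a},\boldsymbol{b}]$ whose cuts in the $i$-th direction are the union of all $i$-direction cuts of these $2^n$ partitions together with $c_i$. On each sub-rectangle $\Delta^{\ast}$ restricts to a refinement of the chosen partition there, so by the refinement fact its contribution is at least $\bigvee_{\text{sub-rect}}(f) - \varepsilon/2^n$; summing over the $2^n$ sub-rectangles and letting $\varepsilon \to 0$ closes the argument.

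The main obstacle is the one-cut additivity $\Delta_f^n I = \Delta_f^n I' + \Delta_f^n I''$, but this is a direct combinatorial computation from the alternating-sum formula; the rest is purely formal. A minor bookkeeping point is that the variation definition appears to require the same number $N$ of cut points in every coordinate direction, which one can always arrange by padding any product partition with extra cut points, an operation that by the same refinement fact only increases the variation sum and hence does not affect either inequality.
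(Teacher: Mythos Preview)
Your proposal is correct and follows essentially the same route as the paper: both directions rest on inserting the cut points $c_i$ and using the one-cut additivity $\Delta_f^n I = \Delta_f^n I' + \Delta_f^n I''$ (hence $|\Delta_f^n I| \leq |\Delta_f^n I'| + |\Delta_f^n I''|$) to compare a partition with its refinement. The paper states this additivity more tersely and, for the $\geq$ direction, asserts directly that a single partition with midpoint $c_i$ can be chosen $\varepsilon/2^n$-near-optimal on every sub-rectangle, whereas you spell out the common-refinement construction and the padding to equalize the number of cuts; your version is more explicit but the argument is the same.
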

\begin{remark}
    Lemma 2.3 shows that if a rectangle is divided into smaller rectangles, the sum of the total variations on the smaller rectangles equals the total variation on the original rectangle, indicating that the total variation is additive over domain.  
\end{remark}

\begin{proof}
    First prove
    \begin{equation}
        \bigvee_{\boldsymbol{a}}^{\boldsymbol{b}}(f)\leq\sum_{j\in\{0,1\}}\bigvee_{I_{1j}\times\dots\times I_{nj}}(f).
    \end{equation}
    Let the original partition be \(\Delta_1\). Let \(\Delta_2\) be the partition obtained by adding the point \(\boldsymbol{c}=(c_1,\dots,c_n)\) to \(\Delta_1\), see Fig. 2.2.
    \begin{figure}[h]
        \centering
        \includegraphics[width=8cm]{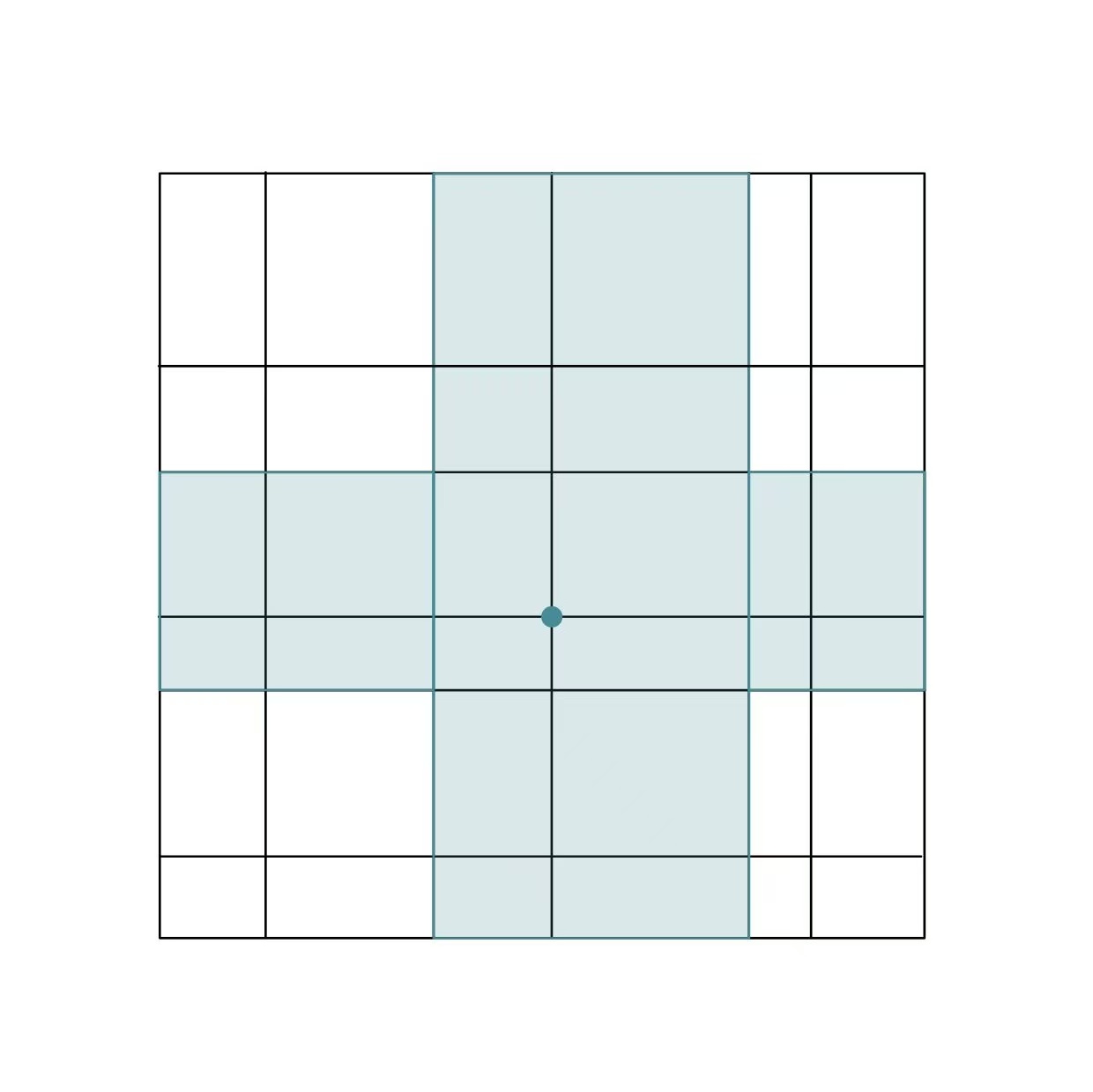}\\[-1.5em]
        \caption{An illustration of the two-dimensional case. The blue point in the middle is the newly added partition point \(\boldsymbol{c}\). Only part of rectangles are affected by the new point; they are divided into smaller rectangles.}
    \end{figure}
    
    It is easy to prove that if a rectangle is divided into smaller rectangles, the sum of the absolute values of the joint increments of \(f\) over the smaller rectangles is greater than or equal to the absolute value of the joint increment over the original rectangle. Therefore, the sum of the absolute values of the joint increments under partition \(\Delta_1\) is less than or equal to that under partition \(\Delta_2\). Thus (45) is proved.

    Next, prove
    \begin{equation}
        \bigvee_{\boldsymbol{a}}^{\boldsymbol{b}}(f)\geq\sum_{j\in\{0,1\}}\bigvee_{I_{1j}\times\dots\times I_{nj}}(f).
    \end{equation}
    By the definition of supremum, for any \(\varepsilon>0\), take a sufficiently large even number \(N\). There exists a partition
    \[a_i=x_{0i}<x_{1i}<\dots<x_{\frac{N}{2}i}=c_i<x_{(\frac{N}{2}+1)i}<\dots<x_{Ni}=b_i(i=1,\dots,n)\]
    dividing each \(I_{1j}\times\dots\times I_{nj}(j\in\{0,1\})\) into \((\frac{N}{2})^n\) small rectangles, and such that each \(I_{1j}\times\dots\times I_{nj}\) satisfies 
    \[\sum_{i=1}^{(\frac{N}{2})^n}\lvert\Delta_f^n I_i\rvert
    \geq\bigvee_{I_{1j}\times\dots\times I_{nj}}(f)-\frac{\varepsilon}{2^n}.\]
    This shows that there exists a partition of \([\boldsymbol{a},\boldsymbol{b}]\), such that
    \[\sum_{i=1}^{N^n}\lvert\Delta_f^n I_i\rvert\geq\sum_{j\in\{0,1\}}\bigvee_{I_{1j}\times\dots\times I_{nj}}(f)-\varepsilon.\]
    Thus,
    \[\bigvee_{\boldsymbol{a}}^{\boldsymbol{b}}(f)\geq\sum_{i=1}^{N^n}\lvert\Delta_f^n I_i\rvert\geq\sum_{j\in\{0,1\}}\bigvee_{I_{1j}\times\dots\times I_{nj}}(f)-\varepsilon.\]
    By the arbitrariness of \(\varepsilon>0\), we prove
    \[\bigvee_{\boldsymbol{a}}^{\boldsymbol{b}}(f)\geq\sum_{j\in\{0,1\}}\bigvee_{I_{1j}\times\dots\times I_{nj}}(f).\]
    
    Combining (45) and (46), we obtain
    \[\bigvee_{\boldsymbol{a}}^{\boldsymbol{b}}(f)=\sum_{j\in\{0,1\}}\bigvee_{I_{1j}\times\dots\times I_{nj}}(f).\]
\end{proof}
Now prove Theorem 1.4.
\begin{proof}
    Sufficiency. This follows from
    \[\bigvee_{\boldsymbol{a}}^{\boldsymbol{b}}(f)\leq\bigvee_{\boldsymbol{a}}^{\boldsymbol{b}}(g)+\bigvee_{\boldsymbol{a}}^{\boldsymbol{b}}(h)\leq\Delta_g^n [\boldsymbol{a},\boldsymbol{b}]+\Delta_h^n [\boldsymbol{a},\boldsymbol{b}].\]
    
    Necessity. Let
    \[g(\boldsymbol{x})=\bigvee_{\boldsymbol{a}}^{\boldsymbol{x}}(f),\]
    By the definition of total variation, obviously \(g\) is jointly monotone increasing on \([\boldsymbol{a},\boldsymbol{b}]\). Let
    \[h(\boldsymbol{x})=\bigvee_{\boldsymbol{a}}^{\boldsymbol{x}}(f)-f(\boldsymbol{x}),\]
    For any \(\boldsymbol{u}\leq\boldsymbol{v}\in[\boldsymbol{a},\boldsymbol{b}]\),
    \begin{equation}
        \begin{aligned}[b]
             \Delta_h^n[\boldsymbol{u},\boldsymbol{v}]
             &=\sum_{\epsilon_1,\dots,\epsilon_n\in\{0,1\}}(-1)^{n+\epsilon_1+\dots+\epsilon_n}\bigvee_{\boldsymbol{a}}^{\boldsymbol{u}+\boldsymbol{\epsilon}\circ(\boldsymbol{v}-\boldsymbol{u})}(f)\\
             &-\sum_{\epsilon_1,\dots,\epsilon_n\in\{0,1\}}(-1)^{n+\epsilon_1+\dots+\epsilon_n}f(\boldsymbol{u}+\boldsymbol{\epsilon}\circ(\boldsymbol{v}-\boldsymbol{u}))
        \end{aligned}
    \end{equation}
    By Lemma 2.3, the total variation is additive over domain, so (47) can be simplified to
    \[\Delta_h^n[\boldsymbol{u},\boldsymbol{v}]=\bigvee_{\boldsymbol{u}}^{\boldsymbol{v}}(f)-\Delta_f^n[\boldsymbol{u},\boldsymbol{v}]\geq0.\]
    By the definition of a jointly monotone increasing function, we know \(h(\boldsymbol{x})\) is jointly monotone increasing on \([\boldsymbol{a},\boldsymbol{b}]\). And because
    \[f(\boldsymbol{x})=g(\boldsymbol{x})-h(\boldsymbol{x}),\]
    it is proved that if \(f\) is of bounded variation on \([\boldsymbol{a},\boldsymbol{b}]\), it can be decomposed into the difference of two jointly monotone increasing functions.
\end{proof}
\subsection{Proof of Theorem 1.5}
\begin{proof}
    First, prove that if a function \(f\) defined on \([\boldsymbol{a},\boldsymbol{b}]\) is of bounded variation, then \(f^{(n)}(\boldsymbol{x})\) exists almost everywhere on \([\boldsymbol{a},\boldsymbol{b}]\).

    By Theorem 1.4, \(f\) can be decomposed into the difference of two jointly monotone increasing functions \(g,h\), where \(g=\bigvee\limits_{\boldsymbol{a}}^{\boldsymbol{b}}(f),h=\bigvee\limits_{\boldsymbol{a}}^{\boldsymbol{b}}(f)-f\), i.e.,
    \[f(\boldsymbol{x})=g(\boldsymbol{x})-h(\boldsymbol{x}),\boldsymbol{x}\in[\boldsymbol{a},\boldsymbol{b}].\]
    By Theorem 1.1, \(g^{(n)}\) and \(h^{(n)}\)exist almost everywhere. Then we have
    \begin{equation}
        \varliminf_{k\to0^+}\frac{\Delta_{g}^n[\boldsymbol{x},\boldsymbol{x}+k]}{k^n}=\varlimsup_{k\to0^+}\frac{\Delta_{g}^n[\boldsymbol{x},\boldsymbol{x}+k]}{k^n},
    \end{equation}
    \begin{equation}
        \varliminf_{k\to0^+}\frac{\Delta_{h}^n[\boldsymbol{x},\boldsymbol{x}+k]}{k^n}=\varlimsup_{k\to0^+}\frac{\Delta_{h}^n[\boldsymbol{x},\boldsymbol{x}+k]}{k^n}.
    \end{equation}
    We now prove that \(f^{(n)}\) exists almost everywhere and equals \(g^{(n)}-h^{(n)}\). Because
    \begin{equation}
        \varliminf_{k\to0^+}\frac{\Delta_f^n[\boldsymbol{x},\boldsymbol{x}+k]}{k^n}=\varliminf_{k\to0^+}\frac{\Delta_{g}^n[\boldsymbol{x},\boldsymbol{x}+k]}{k^n}-\varlimsup_{k\to0^+}\frac{\Delta_{h}^n[\boldsymbol{x},\boldsymbol{x}+k]}{k^n},
    \end{equation}        
    \begin{equation}
        \varlimsup_{k\to0^+}\frac{\Delta_f^n[\boldsymbol{x},\boldsymbol{x}+k]}{k^n}=\varlimsup_{k\to0^+}\frac{\Delta_{g}^n[\boldsymbol{x},\boldsymbol{x}+k]}{k^n}-\varliminf_{k\to0^+}\frac{\Delta_{h}^n[\boldsymbol{x},\boldsymbol{x}+k]}{k^n}.
    \end{equation}
    Therefore, from (48), (49), (50), and (51), we get
    \begin{align*}
        f^{(n)}(\boldsymbol{x})&=\lim_{k\to0^+}\frac{\Delta_f^n[\boldsymbol{x},\boldsymbol{x}+k]}{k^n}\\
        &=\lim_{k\to0^+}\frac{\Delta_{g}^n[\boldsymbol{x},\boldsymbol{x}+k]}{k^n}-\lim_{k\to0^+}\frac{\Delta_{h}^n[\boldsymbol{x},\boldsymbol{x}+k]}{k^n}\\
        &=g^{(n)}(\boldsymbol{x})-h^{(n)}(\boldsymbol{x}).
    \end{align*}
    Thus, it is proved that \(f^{(n)}\) exists almost everywhere on \([\boldsymbol{a},\boldsymbol{b}]\) and equals \(g^{(n)}-h^{(n)}\).
    
    Since \(g,h\in L[\boldsymbol{a},\boldsymbol{b}]\), hence \(f^{(n)}\in L[\boldsymbol{a},\boldsymbol{b}]\), i.e., \(f^{(n)}\) is integrable on \([\boldsymbol{a},\boldsymbol{b}]\).

    Now prove inequality (7). By Theorem 1.1, we have
    \begin{equation}
        \begin{aligned}[b]
            \int_{\boldsymbol{a}}^{\boldsymbol{b}}g^{(n)}(\boldsymbol{x})\diff \boldsymbol{x}&\leq\Delta_g^n[\boldsymbol{a},\boldsymbol{b}]\\
            &=\sum_{\epsilon_1,\dots,\epsilon_n\in\{0,1\}}(-1)^{n+\epsilon_1+\dots+\epsilon_n}\bigvee_{\boldsymbol{a}}^{\boldsymbol{a}+\boldsymbol{\epsilon}\circ(\boldsymbol{b}-\boldsymbol{a})}(f)\\
            &=0+\bigvee_{\boldsymbol{a}}^{\boldsymbol{b}}(f).    
        \end{aligned}
    \end{equation}
    We now prove that for almost every \(\forall\boldsymbol{x}\in[\boldsymbol{a},\boldsymbol{b}]\), \(f^{(n)},g^{(n)}\) satisfy
    \begin{equation}
        \lvert f^{(n)}(\boldsymbol{x})\rvert\leq g^{(n)}(\boldsymbol{x}).
    \end{equation}
    This follows from
    \begin{align*}
        f^{(n)}(\boldsymbol{x})&=\lim_{N\to+\infty}N^n\Delta_f^n[\boldsymbol{x},\boldsymbol{x}+\frac{1}{N}]\\
        &\leq\lim_{N\to+\infty}N^n\bigvee_{\boldsymbol{x}}^{\boldsymbol{x}+\frac{1}{N}}(f)\\
        &=g^{(n)}(\boldsymbol{x}).
    \end{align*}

    Therefore, from (52) and (53), we can prove
    \[\int_{\boldsymbol{a}}^{\boldsymbol{b}}\lvert f^{(n)}(\boldsymbol{x})\rvert\diff \boldsymbol{x}\leq\int_{\boldsymbol{a}}^{\boldsymbol{b}}g^{(n)}(\boldsymbol{x})\diff \boldsymbol{x}=\bigvee_{\boldsymbol{a}}^{\boldsymbol{b}}(f).\]
\end{proof}

\subsection{Proof of Corollary 1.6}
Before proving Corollary 1.6, we first give the following lemma.
\begin{lemma}
    Let \(\boldsymbol{a},\boldsymbol{b}\in\mathbb{R}^n\), \(f\) be a function defined on \([\boldsymbol{a},\boldsymbol{b}]\). Let \(g(\boldsymbol{x})=\bigvee\limits_{\boldsymbol{a}}^{\boldsymbol{x}}(f)\). \(f\) is absolutely continuous on \([\boldsymbol{a},\boldsymbol{b}]\) if and only if \(g\) is absolutely continuous on \([\boldsymbol{a},\boldsymbol{b}]\).
\end{lemma}
\begin{proof}
    Necessity. Because \(f\) is absolutely continuous on the domain, for any \(\varepsilon_1>0\), there exists \(\delta_1>0\) such that for any finite collection of closed rectangles \(I_i\subset[\boldsymbol{a},\boldsymbol{b}](i=1,\dots,N_1)\) with disjoint interiors, as long as \(\sum\limits_{i=1}^{N_1}mI_i<\delta\), we have
    \(\sum\limits_{i=1}^{N_1}\lvert\Delta_f^nI_i\rvert<\varepsilon_1\).

    Next, prove \(g\) is absolutely continuous on the domain. For any \(\varepsilon_2>0\), take \(\varepsilon_1=\frac{1}{2}\varepsilon_2\), obtaining the corresponding \(\delta_1\). Take \(\delta_2=\delta_1\). For any finite collection of closed rectangles \(I_k\subset[\boldsymbol{a},\boldsymbol{b}](k=1,\dots,N_2)\) with disjoint interiors, satisfying \(\sum\limits_{k=1}^{N_2}I_k\leq\delta_2\), we have
    \begin{equation}
        \sum_{k=1}^{N_2}\lvert\Delta_g^nI_k\rvert=\sum_{k=1}^{N_2}\bigvee_{I_k}(f).
    \end{equation}
    By the definition of total variation, for \(\{I_k\}_{k=1}^{N_2}\), there exists a partition \(\Delta_{\alpha}\) such that
    \begin{equation}
        \sum_{k=1}^{N_2}\bigvee_{I_k}(f)\leq\sum_{\alpha}\lvert\Delta_f^nI_{\alpha}\rvert+\frac{1}{2}\varepsilon_2.
    \end{equation}
    Furthermore, since the sub-rectangles \(I_\alpha\) form a partition of \(\{I_k\}_{k=1}^{N_2}\), we have
    \[\sum_\alpha mI_{\alpha}=\sum_{k=1}^{N_2}mI_k\leq\delta_2=\delta_1.\]
    Therefore, by the absolute continuity of \(f\),
    \begin{equation}
        \sum_\alpha\lvert\Delta_f^nI_{\alpha}\rvert\leq\varepsilon_1=\frac{1}{2}\varepsilon_2.
    \end{equation}
    Combining (54), (55), and (56), we get
    \[\sum_{k=1}^{N_2}\lvert\Delta_g^nI_k\rvert\leq\varepsilon_2.\]
    Therefore, by the definition of absolute continuity, \(g\) is absolutely continuous on the domain.

    Sufficiency. If \(g\) is absolutely continuous on the domain, then for any \(\varepsilon_1>0\), there exists \(\delta_1>0\) such that for any finite collection of closed rectangles \(I_i\subset[\boldsymbol{a},\boldsymbol{b}](i=1,\dots,N_1)\) with disjoint interiors, as long as \(\sum\limits_{i=1}^{N_1}mI_i<\delta\), we have
    \begin{equation}
        \sum\limits_{i=1}^{N_1}\lvert\Delta_g^nI_i\rvert=\sum\limits_{i=1}^{N_1}\bigvee_{I_i}(f)<\varepsilon_1.
    \end{equation}

    Next, prove \(f\) is absolutely continuous on the domain. For any \(\varepsilon_2>0\), take \(\varepsilon_1=\varepsilon_2\), obtaining the corresponding \(\delta_1\). Take \(\delta_2=\delta_1\). For any finite collection of closed rectangles \(I_k\subset[\boldsymbol{a},\boldsymbol{b}](k=1,\dots,N_2)\) with disjoint interiors, satisfying \(\sum\limits_{k=1}^{N_2}I_k\leq\delta_2\), we have
    \begin{equation}
        \sum_{k=1}^{N_2}\lvert\Delta_f^nI_k\rvert\leq\sum_{k=1}^{N_2}\bigvee_{I_k}(f).
    \end{equation}
    By (57), (58) can be bounded by:
    \[\sum_{k=1}^{N_2}\lvert\Delta_f^nI_k\rvert\leq\sum_{k=1}^{N_2}\bigvee_{I_k}(f)<\varepsilon_1=\varepsilon_2.\]
    Therefore, by the definition of absolute continuity, \(f\) is absolutely continuous on the domain.
\end{proof}

Now prove Corollary 1.6.
\begin{proof}
    Necessity. Because \(f\) is of bounded variation on \([\boldsymbol{a},\boldsymbol{b}]\), by Theorem 1.4, \(f\) can be decomposed into the difference of two jointly monotone increasing functions, i.e., \(f(\boldsymbol{x})=g(\boldsymbol{x})-h(\boldsymbol{x})\), where \(g(\boldsymbol{x})=\bigvee\limits_{\boldsymbol{a}}^{\boldsymbol{x}}(f),h(\boldsymbol{x})=\bigvee\limits_{\boldsymbol{a}}^{\boldsymbol{x}}(f)-f(\boldsymbol{x})\).
    
    Next, prove
    \begin{equation}
        \int_{\boldsymbol{a}}^{\boldsymbol{b}}\lvert f^{(n)}(\boldsymbol{x})\rvert\diff\boldsymbol{x}\geq\bigvee_{\boldsymbol{a}}^{\boldsymbol{b}}(f).
    \end{equation}
    
    Because \(f\) is absolutely continuous on \([\boldsymbol{a},\boldsymbol{b}]\), by Lemma 2.4, \(g\) is absolutely continuous on \([\boldsymbol{a},\boldsymbol{b}]\). 
    It is easy to prove that \(h(\boldsymbol{x})=g(\boldsymbol{x})-f(\boldsymbol{x})\) is absolutely continuous on \([\boldsymbol{a},\boldsymbol{b}]\). 
    Since \(g,h\) are jointly monotone increasing and absolutely continuous, by Theorem 1.1, for any rectangle \(I\subset[\boldsymbol{a},\boldsymbol{b}]\), we have
    \begin{equation}
        \int_Ig^{(n)}(\boldsymbol{x})\diff\boldsymbol{x}=\bigvee_I(g)=\Delta_g^nI=\bigvee_I(f),
    \end{equation}
    \begin{equation}
        \int_Ih^{(n)}(\boldsymbol{x})\diff\boldsymbol{x}=\bigvee_I(h)=\Delta_h^nI=\bigvee_I(f)-\Delta_f^nI.
    \end{equation}
    Therefore, from (60) and (61), it follows that
    \begin{equation}
         \int_If^{(n)}(\boldsymbol{x})\diff\boldsymbol{x}=\int_Ig^{(n)}(\boldsymbol{x})\diff\boldsymbol{x}-\int_Ih^{(n)}(\boldsymbol{x})\diff\boldsymbol{x}=\Delta_f^nI.  
    \end{equation}
    
    By the definition of total variation, for any \(\varepsilon>0\), there exists a partition \(\Delta_\alpha\), such that
    \begin{equation}
        \bigvee_{\boldsymbol{a}}^{\boldsymbol{b}}(f)\leq\sum_\alpha\lvert \Delta_f^nI_\alpha\rvert+\varepsilon.
    \end{equation}
    Therefore, from (62), we have
    \begin{align*}
        \bigvee_{\boldsymbol{a}}^{\boldsymbol{b}}(f)&\leq\sum_\alpha\lvert \Delta_f^nI_\alpha\rvert+\varepsilon\\
        &=\sum_\alpha\lvert \int_{I_\alpha}f^{(n)}(\boldsymbol{x})\diff\boldsymbol{x}\rvert+\varepsilon\\
        &\leq\sum_\alpha\int_{I_\alpha}\lvert f^{(n)}(\boldsymbol{x})\rvert\diff\boldsymbol{x}+\varepsilon.
    \end{align*}
    Since \(\varepsilon\) is arbitrary, the bound is proved.

    Since \(f\) is of bounded variation on \([\boldsymbol{a},\boldsymbol{b}]\), inequality (7) holds. Combining (7) and (59), we prove that if \(f\) is absolutely continuous on \([\boldsymbol{a},\boldsymbol{b}]\), then
    \[\int_{\boldsymbol{a}}^{\boldsymbol{b}}\lvert f^{(n)}(\boldsymbol{x})\rvert\diff \boldsymbol{x}=\bigvee_{\boldsymbol{a}}^{\boldsymbol{b}}(f).\]

    Sufficiency. From the condition of the corollary, \(f\) is of bounded variation on \([\boldsymbol{a},\boldsymbol{b}]\), so \(\bigvee\limits_{\boldsymbol{a}}^{\boldsymbol{b}}(f)\) is finite. Furthermore, since
    \[\int_{\boldsymbol{a}}^{\boldsymbol{b}}\lvert f^{(n)}(\boldsymbol{x})\rvert\diff \boldsymbol{x}=\bigvee_{\boldsymbol{a}}^{\boldsymbol{b}}(f),\]
    it follows that \(\lvert f^{(n)}(\boldsymbol{x})\rvert\) is integrable on \([\boldsymbol{a},\boldsymbol{b}]\). 
    By the absolute continuity of integrable functions, and by
    \[\bigvee_{\boldsymbol{a}}^{\boldsymbol{b}}(f)=\int_{\boldsymbol{a}}^{\boldsymbol{b}}\lvert f^{(n)}(\boldsymbol{x})\rvert\diff \boldsymbol{x},\]
    it can be shown that \(g(\boldsymbol{x})=\bigvee\limits_{\boldsymbol{a}}^{\boldsymbol{x}}(f)\) is absolutely continuous on \([\boldsymbol{a},\boldsymbol{b}]\). 
    From the definition of absolute continuity, it is easy to prove that \(f(\boldsymbol{x})\) is absolutely continuous on \([\boldsymbol{a},\boldsymbol{b}]\).
\end{proof}

\subsection{Proof of Theorem 1.7}
Before proving Theorem 1.7, we first give a lemma.
\begin{lemma}{(Riesz)}
    Let \(\{\varphi_{i}\}\) be a sequence of nonnegative jointly monotone increasing functions defined on the rectangle \([\boldsymbol{a},\boldsymbol{b}]\). If for \(\forall\boldsymbol{x}\in[\boldsymbol{a},\boldsymbol{b}]\), \(\sum\limits_{i=1}^{+\infty}\lvert\varphi_{i}(\boldsymbol{x})\rvert<+\infty\), then \(\lim\limits_{i\to +\infty}\varphi_{i}^{(n)}(x)=0\) holds almost everywhere on \([\boldsymbol{a},\boldsymbol{b}]\).   
\end{lemma}
\begin{proof}
    Let \(\varphi(\boldsymbol{x})=\sum\limits_{i=1}^{+\infty}\varphi_{i}(\boldsymbol{x})\). By the conditions of the lemma, \(\varphi(\boldsymbol{x})\) is finite on \([\boldsymbol{a},\boldsymbol{b}]\), so by the additivity of convergent series, it is easy to prove that \(\varphi(\boldsymbol{x})\) is jointly monotone increasing on \([\boldsymbol{a},\boldsymbol{b}]\).
    
    Because \(\varphi,\{\varphi_i\}\) are all jointly monotone increasing functions, by Theorem 1.1, \(\varphi^{(n)}(\boldsymbol{x})\), the series \(\sum\varphi_{i}^{(n)}(\boldsymbol{x})\) converges on \([\boldsymbol{a},\boldsymbol{b}]\setminus E\), where \(m(E)=0\). 
   
    By the additivity of convergent series, it is easy to prove that
    \[\varphi(\boldsymbol{x})-\sum_{i=1}^{n}\varphi_{i}(\boldsymbol{x})=\sum_{i=n+1}^{+\infty}\varphi_{i}(\boldsymbol{x})\]
     is jointly monotone increasing on \([\boldsymbol{a},\boldsymbol{b}]\), so for \(\boldsymbol{x}\in[\boldsymbol{a},\boldsymbol{b}]\setminus E\), we have 
    \[\left(\varphi(\boldsymbol{x})-\sum_{i=1}^{n}\varphi_{i}(\boldsymbol{x})\right)^{(n)}=\varphi^{(n)}(\boldsymbol{x})-\sum_{i=1}^{n}\varphi_{i}^{(n)}(\boldsymbol{x})\geq 0,\]
    i.e., \(\sum\limits_{i=1}^{n}\varphi_{i}^{(n)}(\boldsymbol{x})\leq\varphi^{(n)}(\boldsymbol{x})<+\infty\). Since the series the series \(\sum\varphi_{i}^{(n)}(\boldsymbol{x})\)  of nonnegative terms converges almost everywhere on \([\boldsymbol{a},\boldsymbol{b}]\), it can be proved that \(\lim\limits_{i\to+\infty}\varphi_{i}^{(n)}(\boldsymbol{x})=0\) holds almost everywhere on \([\boldsymbol{a},\boldsymbol{b}]\).
\end{proof}

Now prove Theorem 1.7.
\begin{proof}
    Let
    \[f(\boldsymbol{x})=\sum_{i=1}^{+\infty}f_{i}(\boldsymbol{x}),\]
    Let
    \[\varphi_{N}(\boldsymbol{x}) = f(\boldsymbol{x})-\sum_{i=1}^{N}f_{i}(x)=\sum_{i=N+1}^{+\infty}f_{i}(\boldsymbol{x})(N=1,2,\cdots).\]
    It is easy to prove that \(\varphi_N\) is finite and jointly monotone increasing on \([\boldsymbol{a},\boldsymbol{b}]\), therefore \(\varphi_N^{(n)}(\boldsymbol{x})\) exists and is finite on \([\boldsymbol{a},\boldsymbol{b}]\setminus E_1\), where \(m(E_1)=0\).
    
    Because
    \[\varphi_{N}(\boldsymbol{x})-\varphi_{N+1}(\boldsymbol{x})=f_{N+1}(\boldsymbol{x}),\]
    for \(\boldsymbol{x}\in[\boldsymbol{a},\boldsymbol{b}]\setminus E_1\), it follows that
    \[\varphi_{N}^{(n)}(\boldsymbol{x})-\varphi_{N+1}^{(n)}(\boldsymbol{x})=f_{N+1}^{(n)}(\boldsymbol{x})\geq 0.\]
    This shows that the sequence \(\{\varphi_{N}^{(n)}(x)\}\) is decreasing and converges on \([\boldsymbol{a},\boldsymbol{b}]\setminus E_1\).
    
    Since \(\lim\limits_{N\to+\infty}\varphi_{N}(\boldsymbol{x})=0\), there exists a subsequence \(\{\varphi_{k_{N}}(\boldsymbol{x})\}\) such that \(\varphi_{k_{N}}(\boldsymbol{x})<\frac{1}{2^{N}}\), then
    \[\sum_{N=1}^{+\infty}\varphi_{k_{N}}(\boldsymbol{x})<+\infty.\]
    By Lemma 2.5, it can be proved that \(\lim\limits_{N\to +\infty}\varphi_{k_{N}}^{(n)}(\boldsymbol{x})=0\) holds on \([\boldsymbol{a},\boldsymbol{b}]\setminus E_2\), where \(m(E_2)=0\). Therefore, for \(\boldsymbol{x}\in[\boldsymbol{a},\boldsymbol{b}]\setminus (E_1\cup E_2)\), we have
    \[\lim_{N\to+\infty}\left(f^{(n)}(\boldsymbol{x})-\sum_{i=1}^{N}f_{i}^{(n)}(\boldsymbol{x})\right)=0,\]
    i.e.,
    \[\left(\sum_{i=1}^{\infty} f_{i}\right)^{(n)}(\boldsymbol{x})= \sum_{i=1}^{\infty} f_{i}^{(n)}(\boldsymbol{x}).\]
\end{proof}
\bibliographystyle{amsplain}

\appendix
\section{}
\subsection{Proof of Corollary 1.8}
Before proving Corollary 1.8, we first give the following lemma.
\begin{lemma}
    If \(f\in L[\boldsymbol{a},\boldsymbol{b}]\), let
    \[F_h(\boldsymbol{x})=\frac{1}{h^n}\int_{\boldsymbol{x}}^{\boldsymbol{x}+h}f\diff \boldsymbol{x},\]
    then
    \[
        \lim_{h\to0^+}\int_{\boldsymbol{a}}^{\boldsymbol{b}}\lvert F_h(\boldsymbol{x})-f(\boldsymbol{x})\rvert\diff \boldsymbol{x}=0.
    \]
\end{lemma}

\begin{proof}
    \[F_h(\boldsymbol{x})-f(\boldsymbol{x})=\frac{1}{h^n}\int_0^h\dots\int_0^h f(x_1+p_1,\dots,x_n+p_n)-f(\boldsymbol{x})\diff p_1\dots\diff p_n.\]
    Then
    \begin{align*}
        &\int_{\boldsymbol{a}}^{\boldsymbol{b}}\lvert F_h(\boldsymbol{x})-f(\boldsymbol{x})\rvert\diff \boldsymbol{x}\\
        &=\frac{1}{h^n}\int_{\boldsymbol{a}}^{\boldsymbol{b}}\int_0^h\dots\int_0^h\lvert f(x_1+p_1,\dots,x_n+p_n)-f(\boldsymbol{x})\rvert\diff p_1\dots\diff p_n\diff \boldsymbol{x}.
    \end{align*}
    
    We now prove that \(f(x_1+p_1,\dots,x_n+p_n)-f(\boldsymbol{x})\) is measurable with respect to \(x_i,p_i\). Take \((x_1,p_1)\) as an example. Focus only on the part of the integrand related to \(x_1,p_1\), i.e., focus on \(f(x_1+p_1)\).
    By Tonelli's theorem, for \(a.e.(x_2+p_2,\dots,x_n+p_n)\in\mathbb{R}^{n-1},f|_{(x_2+p_2,\dots,x_n+p_n)}(x_1+p_1)\) as a function of \(x_1,p_1\) is measurable on \(\mathbb{R}\). Consider only the case where \(f|_{(x_2+p_2,\dots,x_n+p_n)}(x_1+p_1)\) is measurable as a function of \(x_1,p_1\) on \(\mathbb{R}\).
    
    Let it be \(f\circ h(x_1,p_1)\), where \(h(x_1,p_1)=x_1+p_1\). Let \(G\) be any open set in \(\mathbb{R}\), then \(f^{-1}(G)\) is a measurable set in \(\mathbb{R}\), denoted as \(G_{\delta}-N(mN=0)\).
    \[h^{-1}(f^{-1}(G))=h^{-1}(G_{\delta})+h^{-1}(N).\]
    Because \(h\) is a continuous function, \(h^{-1}(G_{\delta})\) is an open set in \(\mathbb{R}\). 
    We now prove that \(h^{-1}(N)\) is a null set in \(\mathbb{R}^2\).
    \(x_1+p_1=N\), i.e., \(p_1=N-x_1\), thus 
    \[m(x_1\times p_1)=m(\mathbb{R}\times N)=0.\]
    Since null sets are measurable, \(h^{-1}(N)\) is a measurable set in \(\mathbb{R}^2\), thus \(f\circ h,h^{-1}\circ f^{-1}\) map measurable sets to measurable sets, proving that \(f(x_1+p_1)\) is a measurable function of \(x_1,p_1\).
    
    It can be similarly proved that \(f(x_1+p_1,\dots,x_n+p_n)-f(\boldsymbol{x})\) is measurable with respect to \(x_i,p_j(i,j=1\dots,n)\).

    Since \(f(x_1+p_1,\dots,x_n+p_n)-f(\boldsymbol{x})\) is measurable with respect to \(x_i,p_j;\forall x_i,x_j(i\neq j);\forall p_i,p_j(i\neq j)\), by Tonelli's theorem, the order of the n-fold integral can be changed:
    \begin{align*}
        &\frac{1}{h^n}\int_{\boldsymbol{0}}^{\boldsymbol{a}}\int_0^h\dots\int_0^h\lvert f(x_1+p_1,\dots,x_n+p_n)-f(\boldsymbol{x})\rvert\diff p_1\dots\diff p_n\diff \boldsymbol{x}\\
        &=\frac{1}{h^n}\int_0^h\dots\int_0^h\diff p_1\dots\diff p_n\int_{\boldsymbol{a}}^{\boldsymbol{b}}\lvert f(x_1+p_1,\dots,x_n+p_n)-f(\boldsymbol{x})\rvert\diff \boldsymbol{x}\\
        &\leq\frac{1}{h^n}\int_0^h\dots\int_0^h\diff p_1\dots\diff p_n\int_{\mathbb{R}^n}\lvert f(x_1+p_1,\dots,x_n+p_n)-f(\boldsymbol{x})\rvert\diff \boldsymbol{x}.
    \end{align*}
    According to the average continuity of integrable functions, for any \(\varepsilon>0\), there exists \(\delta>0\), such that whenever \(0<t<\delta\), we have
    \[\int_{\mathbb{R}^n}\lvert f(\boldsymbol{x}+t)-f(\boldsymbol{x})\rvert\diff \boldsymbol{x}<\varepsilon.\]
    Then, if \(0<p_i<\delta\), we have
    \begin{align*}
        &\int_{\mathbb{R}^n}\lvert f(x_1+p_1,\dots,x_n+p_n)-f(\boldsymbol{x})\rvert\diff \boldsymbol{x}\\
        &=\int_{\mathbb{R}^n}\lvert f(x_1+p_1,\dots,x_n+p_n)-f(x_1,x_2+p_2\dots,x_n+p_n)\\
        &+f(x_1,x_2+p_2\dots,x_n+p_n)-f(x_1,x_2,x_3+p_3\dots,x_n+p_n)\\
        &+\dots\\
        &+f(x_1\dots,x_{n-1},x_n+p_n)-f(x_1,\dots,x_n)\rvert\diff \boldsymbol{x}\\
        &\leq \int_{\mathbb{R}^n}\lvert f(x_1+p_1,\dots,x_n+p_n)-f(x_1,x_2+p_2\dots,x_n+p_n)\rvert\diff \boldsymbol{x}\\
        &+\dots+\int_{\mathbb{R}^n}\lvert f(x_1\dots,x_{n-1},x_n+p_n)-f(x_1,\dots,x_n)\rvert\diff \boldsymbol{x}\\
        &< n\varepsilon.
    \end{align*}
    Thus, whenever \(0<h<\delta\),
    \begin{align*}
        &\frac{1}{h^n}\int_{\boldsymbol{a}}^{\boldsymbol{b}}\int_0^h\dots\int_0^h\lvert f(x_1+p_1,\dots,x_n+p_n)-f(\boldsymbol{x})\rvert\diff p_1\dots\diff p_n\diff \boldsymbol{x}\\
        &\leq\frac{1}{h^n}\int_0^h\dots\int_0^h\diff p_1\dots\diff p_n\int_{\boldsymbol{a}}^{\boldsymbol{b}}\lvert f(x_1+p_1,\dots,x_n+p_n)-f(\boldsymbol{x})\rvert\diff \boldsymbol{x}\\
        &<\frac{1}{h^n}\int_0^h\dots\int_0^h\diff p_1\dots\diff p_n\cdot n\varepsilon\\
        &=n\varepsilon.
    \end{align*}
    This proves
    \[\lim_{h\to0^+}\int_{\boldsymbol{a}}^{\boldsymbol{b}}\lvert F_h(\boldsymbol{x})-f(\boldsymbol{x})\rvert\diff \boldsymbol{x}=0.\]
\end{proof}
Now prove Corollary 1.8.
\begin{proof}
    \(\int_{\boldsymbol{a}}^{\boldsymbol{x}} f(\boldsymbol{x})\diff \boldsymbol{x}\)can be decomposed as the difference of two jointly monotone increasing functions, 
    \[\int_{\boldsymbol{a}}^{\boldsymbol{x}} f\diff \boldsymbol{x}=\int_{\boldsymbol{a}}^{\boldsymbol{x}} f^+\diff \boldsymbol{x}-\int_{\boldsymbol{a}}^{\boldsymbol{x}} f^-\diff \boldsymbol{x},\]
    therefore, by Theorem 1.1, \(\int_{\boldsymbol{a}}^{\boldsymbol{x}}f(\boldsymbol{x})\diff \boldsymbol{x}\)has a joint derivative almost everywhere. By Lemma A.1, we have 
    \[\lim_{k\to+\infty}\int_{\boldsymbol{a}}^{\boldsymbol{b}}\lvert F_{\frac{1}{k}}(\boldsymbol{x})-f(\boldsymbol{x})\rvert\diff \boldsymbol{x}=0.\]
    By Chebyshev's inequality, for any \(\sigma\), we have
    \[\sigma\cdot m\left\{\boldsymbol{x}\mid \boldsymbol{x}\in[\boldsymbol{a},\boldsymbol{b}],\lvert F_{\frac{1}{k}}(\boldsymbol{x})-f(\boldsymbol{x})\rvert\geq\sigma\right\}\rightarrow0,(k\rightarrow+\infty),\]
    thus \(F_{\frac{1}{k}}(\boldsymbol{x})\) converges in measure to \(f(\boldsymbol{x})\). According to Riesz's theorem, there exists a subsequence \(k_l\) such that
    \[\lim_{l\to+\infty}\frac{1}{k_l}F_{\frac{1}{k_l}}(\boldsymbol{x})=f(\boldsymbol{x}),a.e.\boldsymbol{x}\in[\boldsymbol{a},\boldsymbol{b}],\]
    hence
    \[\lim_{h\to 0^+}F_h(\boldsymbol{x})=f(\boldsymbol{x}),a.e.\boldsymbol{x}\in[\boldsymbol{a},\boldsymbol{b}],\]
    i.e.,
    \[\left(\int_{\boldsymbol{a}}^{\boldsymbol{x}}f(\boldsymbol{x})\diff \boldsymbol{x}\right)^{(n)}(\boldsymbol{x})=f(\boldsymbol{x}).\]
\end{proof}
\subsection{Proof of Corollary 1.9}
Before proving Corollary 1.9, we first give the following two lemmas.
\begin{lemma}
    If \(f\) is absolutely continuous on \([\boldsymbol{a},\boldsymbol{b}]\subset\mathbb{R}^n\), then \(f\) is of bounded variation on \([\boldsymbol{a},\boldsymbol{b}]\).
\end{lemma}

\begin{proof}
    By the definition of absolute continuity, take \(\varepsilon=1\). Then there exists \(0<\delta<1\) such that for any finite collection of closed rectangles \(I_i\subset[\boldsymbol{a},\boldsymbol{b}](i=1,\dots,N)\) with disjoint interiors, as long as \(m\sum\limits_{i=1}^{N}I_i<\delta\), we have
    \[\sum_{i=1}^{N}\lvert\Delta_f^nI_i\rvert<\varepsilon=1.\]
    
    Next, prove \(f\) is of bounded variation on \([\boldsymbol{a},\boldsymbol{b}]\). Find a sufficiently large integer \(N\) such that for each \(i\), and partition each \(\frac{b_i-a_i}{N}<\delta<1\) into \([a_i,b_i]N\) equal subintervals. This partition divides \([\boldsymbol{a},\boldsymbol{b}]\) into \(N^n\) rectangles, each satisfying 
    \[mI_i<\delta^N<\delta.\]
    On each rectangle \(I_i\), no matter what partition \(\Delta_\alpha\) is used to divide \(I_i\) into \(\sum\limits_{\alpha} I_\alpha\), we always have \(m\sum\limits_{\alpha} I_\alpha<\delta\), so for any partition, \(\sum\limits_{\alpha}\lvert \Delta_f^nI_\alpha\rvert<\varepsilon\). By the definition of total variation, we have
    \[\bigvee_{I_i}(f)<\varepsilon=1.\]
    Thus,
    \[\bigvee_{\boldsymbol{a}}^{\boldsymbol{b}}(f)=\sum_{i=1}^{N^n}\bigvee_{I_i}(f)<N^n.\]
    This proves that \(f\) is of bounded variation on \([\boldsymbol{a},\boldsymbol{b}]\).
\end{proof}
\begin{lemma} 
    If \(f(\boldsymbol{x})\) is absolutely continuous on \([\boldsymbol{a},\boldsymbol{b}]\), and for almost every \(\boldsymbol{x}\in [\boldsymbol{a},\boldsymbol{b}],f^{(n)}(\boldsymbol{x})=0\). Then \(\Delta_f^n[\boldsymbol{a},\boldsymbol{b}]=0\).
\end{lemma}

\begin{proof}
    Proof contradiction. Suppose not, then \(\Delta_f^n[\boldsymbol{a},\boldsymbol{b}]\neq0\). Let \(\varepsilon_0=\frac{1}{2}\lvert\Delta_f^n[\boldsymbol{a},\boldsymbol{b}]\rvert\). Define the set 
    \[E=\left\{\boldsymbol{x}\mid \boldsymbol{x}\in(\boldsymbol{a},\boldsymbol{b}), f^{(n)}(\boldsymbol{x})=0\right\},\]
    then for any \(\boldsymbol{x}\in E\), since \(f^{(n)}(\boldsymbol{x})=0\), for any \(r>0\), as long as \(h\) is sufficiently small, we have
    \[\lvert\Delta_f^n[\boldsymbol{x},\boldsymbol{x}+h]\rvert<rh^n,\]
    and \([\boldsymbol{x},\boldsymbol{x}+h]\subset [\boldsymbol{a},\boldsymbol{b}]\).
    The collection of all such \([\boldsymbol{x},\boldsymbol{x}+h]\) forms a Vitali covering of \(E\).
    
    Since \(f(\boldsymbol{x})\in AC[\boldsymbol{a},\boldsymbol{b}]\), there exists \(\delta_0\), such that if
    \[\sum_{i=1}^NmI_i<\delta_0,\]
    then
    \[\sum_{i=1}^N\lvert\Delta_f^n I_i\rvert<\varepsilon_0.\]

    Take \(\varepsilon_1=\delta_0\). By the Vitali covering theorem, there exist n-dimensional rectangles
    \[[\boldsymbol{x_1},\boldsymbol{x_1}+h_1],\dots,[\boldsymbol{x_N},\boldsymbol{x_N}+h_N]\subset [\boldsymbol{a},\boldsymbol{b}]\]
    with disjoint interiors. 
    Let their union be\(G\), 
    such that
    \[m\left(E\backslash G\right)=m\left([\boldsymbol{a},\boldsymbol{b}]\backslash G\right)<\varepsilon_1.\]
    Reorder the points \(x_{1i},x_{1i}+h_{1i},\dots,x_{Ni},x_{Ni}+h_{Ni}(i=1,\dots,n)\) in increasing order into \(p_{1i},\dots,p_{2Ni}(i=1,\dots,n)\). All the points \(x_i=p_{ji}(i=1,\dots,n,j=1\dots,2N)\) partition \([\boldsymbol{a},\boldsymbol{b}]\) into \((2N+1)^n\)  rectangles \(I_i\), then
    \begin{equation}
        \begin{aligned}[b]
        \lvert\Delta_f^n[\boldsymbol{a},\boldsymbol{b}]\rvert&\leq\sum_{i=1}^N\lvert\Delta_f^n[\boldsymbol{x_i},\boldsymbol{x_i}+h_i]
        \rvert+\sum_{[\boldsymbol{a},\boldsymbol{b}]\backslash G}\lvert\Delta_f^nI_i\rvert\\
        &<r\sum_{i=1}^Nh_i^n+\sum_{[\boldsymbol{a},\boldsymbol{b}]\backslash G}\lvert\Delta_f^nI_i\rvert.
        \end{aligned}
    \end{equation}
    
    Because \(f\in AC[\boldsymbol{a},\boldsymbol{b}]\), and since \(m\left([\boldsymbol{a},\boldsymbol{b}]\backslash G\right)<\varepsilon_1=\delta_0\), we have 
    \[\sum_{[\boldsymbol{a},\boldsymbol{b}]\backslash G}\lvert\Delta_f^nI_i\rvert<\varepsilon_0.\]
    Choose \(r>0\) such that \(r\sum\limits_{i=1}^Nh_i^n<\epsilon_0\). Then (64) becomes 
    \[2\varepsilon_0<2\varepsilon_0.\]
    This is a contradiction. Therefore, \(\Delta_f^n[\boldsymbol{a},\boldsymbol{b}]=0\).
\end{proof}

Now prove Corollary 1.9.
\begin{proof}
    By Lemma A.2, since \(f(\boldsymbol{x})\in AC[\boldsymbol{a},\boldsymbol{b}]\), we have \(f(\boldsymbol{x})\in BV[\boldsymbol{a},\boldsymbol{b}]\).
    
    By Theorem 1.4, if \(f(\boldsymbol{x})\in BV[\boldsymbol{a},\boldsymbol{b}]\), then \(f\) can be decomposed into the difference of two jointly monotone increasing functions.
    
    By Theorem 1.1, \(f^{(n)}(\boldsymbol{x})\) exists almost everywhere on \([\boldsymbol{a},\boldsymbol{b}]\) and is integrable.
    By Corollary 1.8, since \(f^{(n)}(\boldsymbol{x})\) is integrable on \([\boldsymbol{a},\boldsymbol{b}]\), we have
    \[\left(\int_{\boldsymbol{a}}^{\boldsymbol{x}} f^{(n)}\diff \boldsymbol{x}\right)^{(n)}(\boldsymbol{x})=f^{(n)}(\boldsymbol{x}).\]
   
    Because the n-dimensional integral of an integrable function is absolutely continuous, \(\int_{\boldsymbol{a}}^{\boldsymbol{x}} f^{(n)}\diff \boldsymbol{x}\) is absolutely continuous on \(\mathbb{R}^n\).
    
    Since \(f-\int_{\boldsymbol{a}}^{\boldsymbol{x}} f^{(n)}\diff \boldsymbol{x}\) is absolutely continuous on \([\boldsymbol{a},\boldsymbol{b}]\), and \(\left(f-\int_{\boldsymbol{a}}^{\boldsymbol{x}} f^{(n)}\diff \boldsymbol{x}\right)^{(n)}=0\) almost everywhere on \([\boldsymbol{a},\boldsymbol{b}]\), according to Lemma A.3, we have
    \[\Delta_{f-\int_{\boldsymbol{a}}^{\boldsymbol{x}} f^{(n)}\diff \boldsymbol{x}}^n[\boldsymbol{a},\boldsymbol{b}]=0.\]
    Simplifying yields
    \[\Delta_f^n[\boldsymbol{a},\boldsymbol{b}]=\int_{\boldsymbol{a}}^{\boldsymbol{b}}f^{(n)}\diff \boldsymbol{x}.\]
\end{proof}
\end{document}